\newcommand{\R}{\mathbb R}
\newcommand{\N}{\mathbb N}
\newcommand{\union}{\cup}
\newcommand{\cisom}[1]{[#1]_{\tiny{isom}}}
\newcommand{\Laff}{L_{\text{aff}}}
\newtheorem{thm}{Theorem}[section]
\newtheorem{pro}[thm]{Proposition}
\newtheorem{cor}[thm]{Corollary}
\newtheorem{lem}[thm]{Lemma}
\theoremstyle{definition}
\newtheorem{defn}[thm]{Definition}  
\begin{document}

\title[Continuous hull]
 {Construction of the continuous hull for the combinatorics of a  regular pentagonal tiling of the plane}

\author{Maria Ramirez-Solano}
\thanks{}

\address{Department of Mathematics, University of Copenhagen, Universitetsparken 5,
2100 K\o benhavn \O ,
 Denmark.}

\email{mrs@math.ku.dk}

\keywords{}

\subjclass{}

\thanks{Supported by the Danish National Research Foundation through the Centre
for Symmetry and Deformation (DNRF92), and by the Faculty of Science of the University of Copenhagen.}

\begin{abstract}
In \cite{MRSdiscretehull} we gave the construction of a discrete hull for a combinatorial pentagonal tiling of the plane.
In this paper, we give the construction of a continuous hull for the same combinatorial tiling.
\end{abstract}

\maketitle
\section{Introduction}
In this paper, we consider "a regular pentagonal tiling of the plane" which comes from \cite{StephensonBowers97}, and we show it in Figure \ref{f:conformalregularpentagonaltiling}.
The combinatorics of this tiling are constructed using the subdivision rule shown in Figure \ref{f:subdivisionmapnewdecorated}, while the geometry comes from the theory of Circle Packing.
Although a beautiful tiling, we showed in \cite{MRSnonFLCpentTiling} that this tiling is not FLC with respect to the set of conformal isomorphisms.
This prevents us from studying the tiling with the standard theory for Euclidean tilings of the plane.
In such theory, two compact topological spaces, known as the continuous hull and the discrete hull of the tiling, are constructed.
In \cite{MRSdiscretehull} we stripped down the geometry from the pentagonal tiling and kept only the combinatorial part, and we named it "the combinatorial pentagonal tiling $K$". We defined a \emph{combinatorial tiling} as a 2-dimensional CW-complex homeomorphic to the plane.
 In the same article \cite{MRSdiscretehull}, we showed a construction of a discrete hull for the combinatorial pentagonal tiling $K$, which is a cantor space.
 Elements of the discrete hull are isomorphism classes of combinatorial tilings with a distinguished vertex (the origin of the tiling) locally isomorphic to $K$. Imposing a simpler geometry on the combinatorics, we will now construct its continuous version, known as the continuous hull $\Omega$ of $K$.
To this end, we extend the edge-metric of each tiling of the discrete hull and then extend the discrete hull's  metric to $\Omega$.
We will also define a substitution map on $\Omega$.  We make distinctions between elements of $\Omega$ to the level of points, hence the use of the word continuous in the name. Such metric space is compact (Theorem \ref{t:ContinuousHullCompact}), and unlike its discrete counterpart, this substitution map is a homeomorphism (Theorem \ref{t:OmegaHomeomorphism}).
The extension is of course not unique, but it is a natural one, and we did several attempts to obtain it by playing around with possible topologies preserving the cellular structure, and possible metrics. We like our definition of $\Omega$ for it resembles in many ways the continuous hull of Euclidean tilings of the plane. In Theorem \ref{t:dequivalenttodponOmega}, we show that the discrete metric is equivalent to the continuous one restricted to the discrete hull.
Hence, metric related properties of $\Xi$ are automatically preserved under the metric of $\Omega$.
At the end of this paper, we construct an equivalence relation $R$ on the continuous hull.  It is an open question whether the $C^*$-algebra $C^*(R)$ exists. For that it needs a topology and a Haar system.

\begin{figure}[htbp]
  \begin{minipage}[b]{0.48\linewidth}
    \centering
    \includegraphics[width=\linewidth]{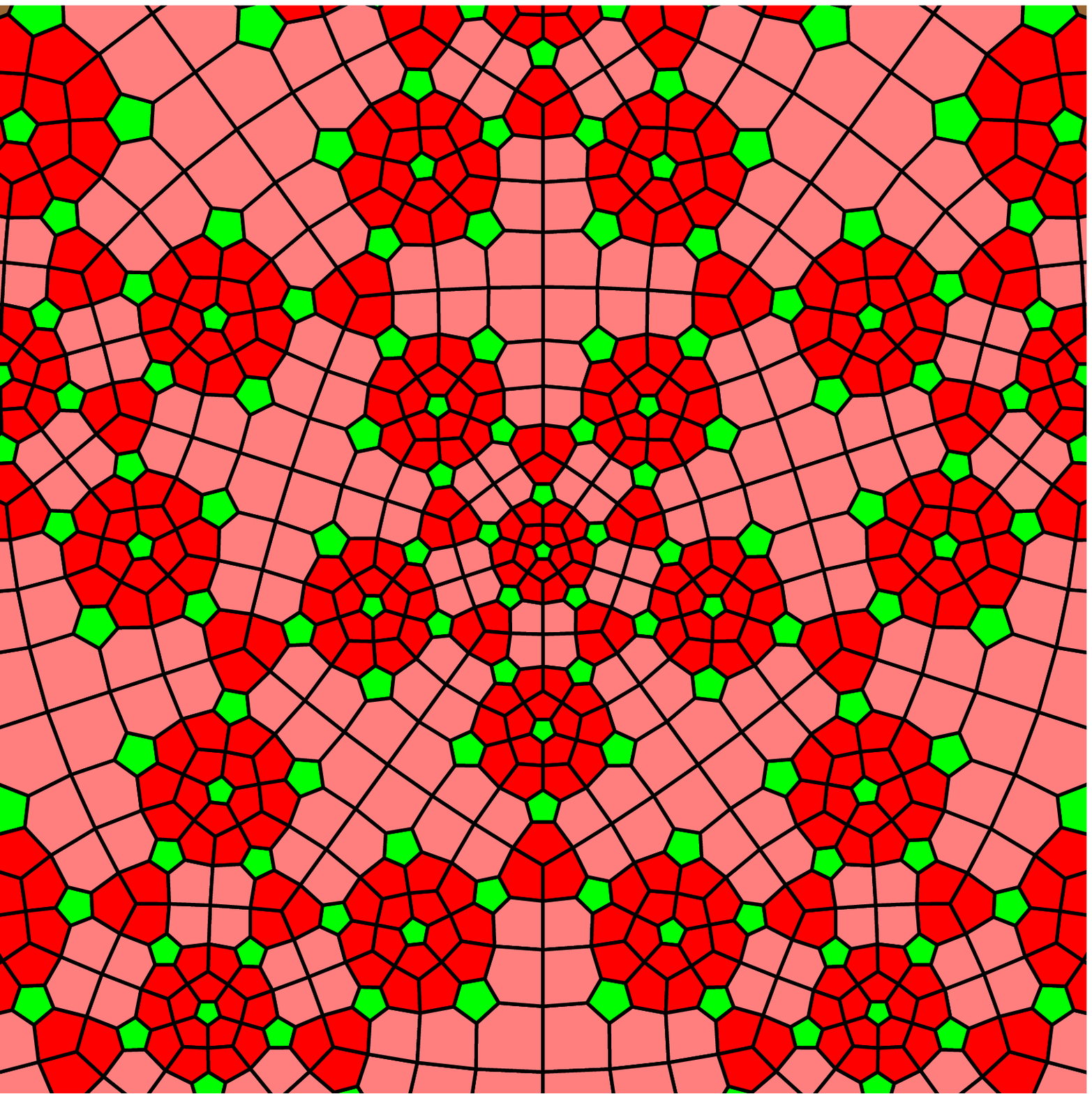}
    \caption{A conformal regular pentagonal tiling of the plane.}
    \label{f:conformalregularpentagonaltiling}
  \end{minipage}
  \begin{minipage}[b]{0.48\linewidth}
    \centering
    \includegraphics[width=\linewidth,viewport=150 150 450 450,clip]{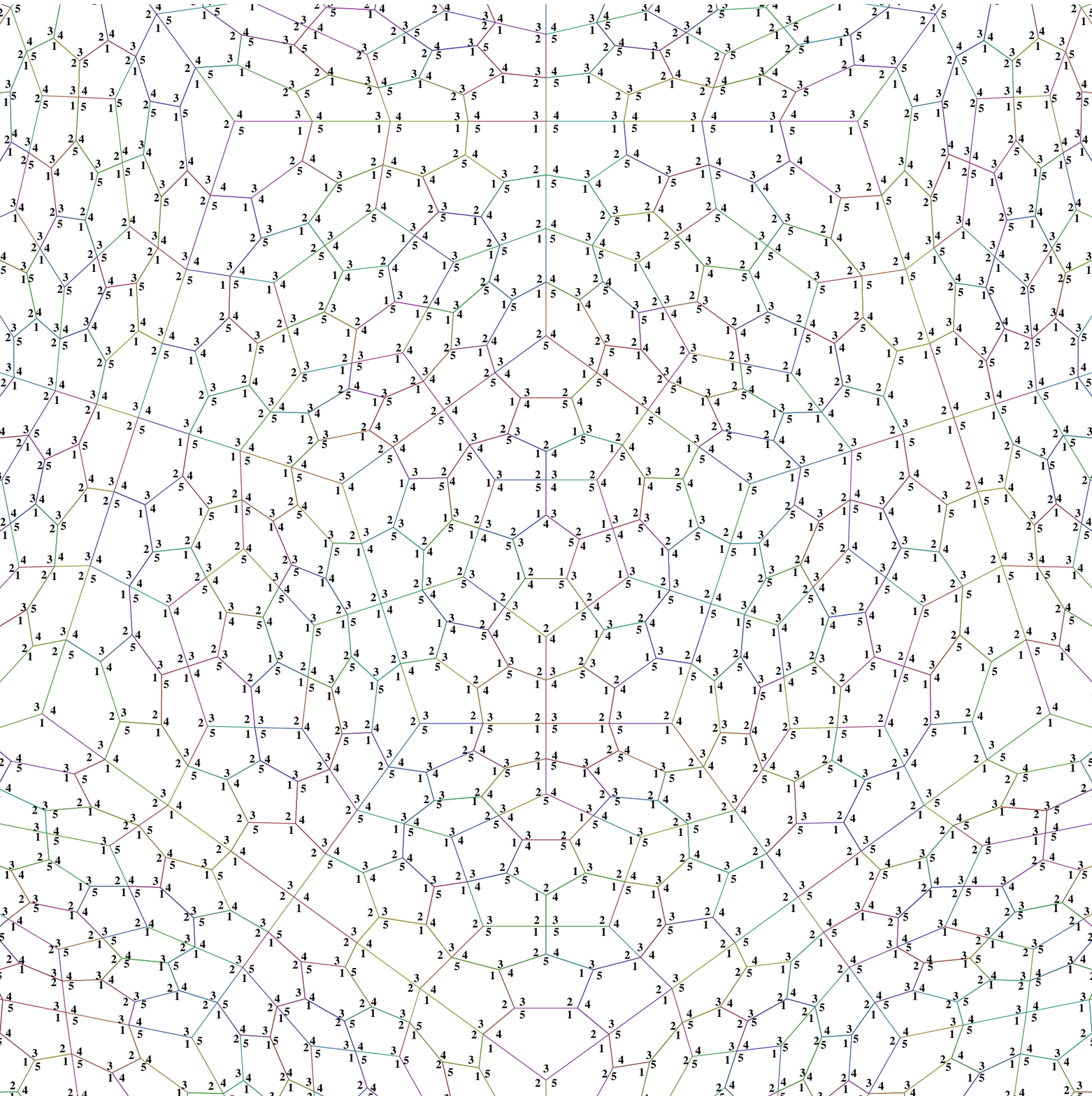}
    \caption{The decorated combinatorial tiling $K$.}
    \label{f:decoratedtiles}
  \end{minipage}
\end{figure}

\begin{figure}
  \centering
  \includegraphics[scale=1]{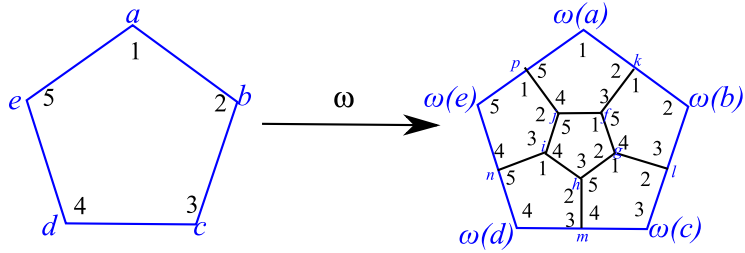}\\
  \caption{Subdivision map. }\label{f:subdivisionmapnewdecorated}
\end{figure}

We start by restating some definitions and results from \cite{MRSdiscretehull}.
The combinatorial pentagonal tiling $K$ is constructed as follows.
Define $K_0$ as a combinatorial pentagon, which is a space homeomorphic to the closed unit disk with five distinguished points on its boundary.
Define $K_n:=\omega^n(K_0)$, $n\in\N_0$. Every $K_n$ has a distinguished central pentagon. We define $\iota_n:K_n\to K_{n+1}$ as an embedding which maps the central pentagon of $K_n$ to the central pentagon of $K_{n+1}$.  Define the complex $$K:=\lim_{n\to\infty} K_n,$$
as the direct limit of the sequence of the finite CW-complexes $K_n$ and embeddings $\iota_n$.
We say that a combinatorial tiling $L$ is \emph{locally isomorphic} to $K$ if for every patch $P$ of $L$ there is a patch $Q$ of $K$ such that $P$ and $Q$ are cell-preserving isomorphic. For such $L$, we define the ball \emph{$B(v,n,L)$} as the subcomplex  of $L$ whose tiles have the property that all its vertices are within edge-distance $n$ of the vertex $v\in L$.

\section{The metric space $(\Omega,d)$}
We start by equipping every combinatorial tiling $L$ locally isomorphic to $K$ with a piecewise affine structure, just as it was done for $K$ in \cite{StephensonBowers97}.
\begin{defn}[$L_{\text{aff}}$]
  Given a combinatorial tiling $L$ locally isomorphic to $K$, we define  \emph{$L_{\text{aff}}$} as the CW-complex $L$, such that the topological space is equipped with the following metric: First we define the unit-edge metric on the one skeleton $L^{(1)}$ making each edge isometric to the unit interval. Then we extend this metric to faces so that each face is isometrically  an Euclidean regular pentagon of edge-length one. We define the distance of an arbitrary pair of points as the length of the shortest path between them.
\end{defn}

  The resulting metric defines the original CW-topology of $L$. Indeed, if $U$ is open in $L$, and $x\in U$, then it is easy to construct a ball with tiny radius centered at $x$ and contained in $U$. On the other hand, if $B$ is a ball, and $x\in B$ then it is also easy to construct an open set contained in the ball $B$, as tiny metric balls are open sets. In particular, cells remain cells, and the automorphisms of $L$ are precisely those of $\Laff$. The piecewise affine structure of $\Laff$ is equivalent to pasting regular unit pentagons together in the pattern of  $L$.
We denote with \emph{$d'$} the \emph{unit-edge metric} on the 1-skeleton $\Laff^{(1)}$, and with \emph{$d$} the \emph{metric} on $\Laff$. The unit-edge metric $d'$ is simply the combinatorial metric of $L$.
A path on $L$ (or $\Laff^{(1)}$) will be called an edge-path or a $d'$-path.
A path on $\Laff$ will be called just a path or a $d$-path.
The restriction of $d$ to $\Laff^{(1)}$ induces a metric on $\Laff^{(1)}$ and the following lemma shows that these two metrics are equivalent.
\begin{thm}\label{t:dledple3donLaff}
  The discrete metric $d'$ on $\Laff^{(1)}$ is equivalent to the metric $d$ on $\Laff$ restricted to $\Laff^{(1)}$. More precisely,  $d\le d' \le 3d$.
\end{thm}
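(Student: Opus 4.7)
The plan is to handle the two inequalities separately. For $d \le d'$ on $\Laff^{(1)}$, observe that any $d'$-path between points $x, y \in \Laff^{(1)}$ lies in the $1$-skeleton, which is isometrically embedded in $\Laff$, because both metrics restrict to the unit interval on each edge. Hence the same curve is a $d$-path of the same length in $\Laff$, and taking infima gives $d(x,y) \le d'(x,y)$.

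For $d' \le 3d$, fix $x, y \in \Laff^{(1)}$ and an almost-geodesic $d$-path $\gamma \colon [0,\ell] \to \Laff$ from $x$ to $y$ of length $\ell$ arbitrarily close to $d(x,y)$. Since the closed faces of $\Laff$ are isometric to regular unit pentagons glued along unit edges and $\gamma$ has finite length, $\gamma$ enters only finitely many faces; therefore it decomposes as a concatenation of finitely many subarcs $\gamma_i$, each lying in a single closed face $F_i$ with endpoints $a_i, b_i \in \partial F_i$. Each $F_i$ is a convex Euclidean pentagon, so the length of $\gamma_i$ is at least the chord length $|a_i - b_i|$, and hence $\sum_i |a_i - b_i| \le \ell$.

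The heart of the argument is then the geometric lemma: \emph{for any two boundary points $a, b$ of a regular unit pentagon $F$, the shortest path along $\partial F$ between them has length at most $3|a-b|$.} This reduces to a case analysis according to whether $a$ and $b$ lie on the same edge, on two adjacent edges, or on two non-adjacent edges of $F$, and a short computation using the interior angle $108^\circ$ shows that the ratio of boundary distance to chord length is at most $1/\cos 36^\circ = 2/\varphi \approx 1.236$, comfortably below $3$. Granting this, replace each $\gamma_i$ by the shorter boundary arc of $\partial F_i$ joining $a_i$ to $b_i$ and concatenate; this yields an edge-path from $x$ to $y$ of length at most $3\sum_i |a_i - b_i| \le 3\ell$. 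Letting $\ell \to d(x,y)$ gives $d'(x,y) \le 3d(x,y)$. The only substantive step is the pentagon lemma, and since the allowed constant $3$ leaves considerable slack over the sharp ratio $2/\varphi$, I expect no real difficulty there; the main bookkeeping obstacle is simply verifying that a rectifiable $d$-path can always be decomposed into face-subarcs in the way claimed, which follows from local finiteness of $\Laff$ and compactness of $\gamma([0,\ell])$.
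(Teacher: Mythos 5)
Your overall strategy is the same as the paper's: decompose a (near-)shortest $d$-path by the pentagons it passes through, replace each subarc by a boundary arc of the corresponding face, and bound the ratio of boundary distance to chord length by $3$; the first inequality $d\le d'$ is also argued exactly as in the paper. However, your stated justification of the key pentagon lemma contains an error: the bound $1/\cos 36^\circ\approx 1.236$ on the ratio of boundary distance to chord length is the correct (sharp) answer only for the case of two \emph{adjacent} edges. For two \emph{non-adjacent} edges the ratio can be substantially larger (e.g.\ for the midpoints of two non-adjacent edges the shorter boundary path has length $2$ while the chord has length about $1.31$, a ratio of about $1.53$, and the worst case is larger still), so the ``short computation using the interior angle'' does not cover all cases. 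The lemma with constant $3$ is nevertheless true, but for the non-adjacent case one needs the separate observation that any two non-adjacent closed edges of a unit regular pentagon are at distance at least $1$ from each other, while the shorter boundary path has length at most $5/2$ (or at most $3$ in the paper's accounting); this is precisely the paper's case (2), so your case analysis must actually be carried out rather than dismissed as slack. The remaining issue you flag yourself --- that a rectifiable path may cross between faces infinitely often, so the decomposition into finitely many face-subarcs needs care (e.g.\ by looking at the components of the complement of $\gamma^{-1}(\Laff^{(1)})$) --- is a real but repairable point of bookkeeping, and the paper glosses over it as well.
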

\begin{proof}
  First, $d\le d'$ because $d$ is the length of the shortest path between two points (and a shortest edge-path is not necessarily a shortest path).
  We will now show that $d'\le 3d$.  Let $\gamma$ be a path of shortest length between two vertices $u,v$. We need to construct an edge-path $\gamma'$ connecting these two vertices. We will show that this edge-path has length less than $3d$. Since $\gamma$ goes through pentagons, it has three possibilities: (1) It  goes along an entire edge. (2) It crosses two non-adjacent edges of a pentagon. (3) It crosses two adjacent edges of a pentagon. See Figure \ref{f:dprimele3d}.
  For case (1) and (2), we have $d'\le 3d$.  For case (3), we have $d'\le 2d$ by the law of cosines.
  Hence, creating an edge-path will consist in gluing pieces of type $(1)$, $(2)$ or $(3)$ sometimes removing some edges. In summary, the $d$-path $\gamma$ of shortest length  induces an edge-path $\gamma'$ of length at most $3d$. Since $d'$ is the length of shortest edge-path,   $d'(u,v)\le edgelength(\gamma')\le 3d(u,v)$.
\end{proof}
\begin{figure}
\centering
  \includegraphics[scale=.7]{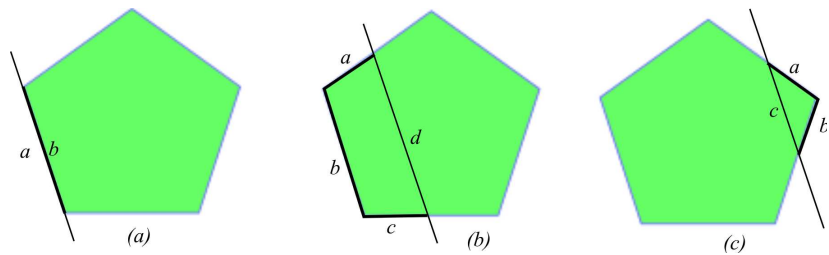}\\
  \caption{A path of shortest length replaced with an edge-path: $(a)$ $a=1=b$. $(b)$ $a+b+c\le 3\le 3d$. $(c)$ $a+b\le 2c$ by the law of the cosines.}\label{f:dprimele3d}
\end{figure}

\begin{cor}\label{c:dballcontainedindpball}
  Let $B_d(v,n,L)$ be a ball with respect to the metric $d$.
  Let $B_{d'}(v,n,L)$ be a ball with respect to the metric $d'$.
  Then $B_{d'}(v,n,L)\subset B_d(v,n,L)$ and $B_{d}(v,n,L)\subset B_{d'}(v,3n,L)$
\end{cor}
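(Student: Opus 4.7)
The plan is to read both inclusions as direct consequences of the two-sided comparison $d\le d'\le 3d$ from Theorem \ref{t:dledple3donLaff}. The balls $B_d(v,n,L)$ and $B_{d'}(v,n,L)$ are, in the spirit of the definition of $B(v,n,L)$ recalled from \cite{MRSdiscretehull}, specified by the distances from $v$ to vertices of tiles, so the whole question reduces to a comparison of $d$ and $d'$ evaluated at vertex pairs, which is exactly what Theorem \ref{t:dledple3donLaff} delivers.

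For the first inclusion, I would take an arbitrary tile $T$ contained in $B_{d'}(v,n,L)$. By definition every vertex $w$ of $T$ satisfies $d'(v,w)\le n$, and since $d\le d'$ on $\Laff^{(1)}$ this gives $d(v,w)\le n$. Therefore all vertices of $T$ lie within $d$-distance $n$ of $v$, so $T\subset B_d(v,n,L)$; varying $T$ yields $B_{d'}(v,n,L)\subset B_d(v,n,L)$. The second inclusion is the symmetric application: if $T\subset B_d(v,n,L)$, each vertex $w$ of $T$ has $d(v,w)\le n$, and the bound $d'\le 3d$ from the same theorem gives $d'(v,w)\le 3n$, putting $T$ inside $B_{d'}(v,3n,L)$.

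Since the corollary is a quantitative restatement of $d\le d'\le 3d$ at the level of balls/subcomplexes, there is no real obstacle to overcome; the only thing to keep track of is the constant $3$ propagating from the edge-path replacement argument carried out in the proof of Theorem \ref{t:dledple3donLaff}.
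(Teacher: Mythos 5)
Your proof is correct and follows essentially the same route as the paper: both reduce the two inclusions to the pointwise inequalities $d\le d'$ and $d'\le 3d$ from Theorem \ref{t:dledple3donLaff}. Your version is slightly more explicit about the balls being subcomplexes determined by the vertices of their tiles, but the argument is the same.
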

\begin{proof}
  If $u\in B_{d'}(v,n,L)$ then  $d(v,u)\le d'(v,u)\le n$ so $u\in B_d(v,n,L)$.
  If $u\in B_{d}(v,n,L)$ then  $d'(v,u)\le 3d(v,u)\le 3n$ so $u\in B_{d'}(v,3n,L)$.
\end{proof}

We define the continuous hull $\Omega$ as
\begin{center}
    \begin{tabular}{   @{}r@{}  @{}p{10cm}@{} }
   $\Omega:=\{[L,x]_{isom}\mid $ & $\, L$  is a combinatorial tiling locally isomorphic to $K$ and point  $x\in \Laff\}.$
    \end{tabular}
\end{center}
With $(L',x')\in [L,x]_{isom}$, we mean that there is an isomorphism $\phi:\Laff\to \Laff'$ which is cell-preserving, decoration preserving,  and isometric on each cell, and such that $\phi(x)=x'$.

We define the metric $d$ on $\Omega$ as follows.
\begin{defn}[metric $d$ on $\Omega$]\index{metric $d$ on $\Omega$}
Define $d:\Omega\times\Omega\to \R_+$ by
$$d(\cisom{L,x},\cisom{L',x'}):=min(\tfrac{1}{\sqrt2},\inf\Lambda),$$
where  $\Lambda\subset \R_+$, and $\varepsilon\in\Lambda$  if there exists maps
\begin{eqnarray*}
&& \phi:B(x,1/\varepsilon,L)\to L',\,\,\,\qquad  d_{L'}(\phi(x),x')\le \varepsilon\\
&& \phi':B(x',1/\varepsilon,L')\to L,\qquad d_{L}(\phi'(x'),x)\le \varepsilon
\end{eqnarray*}
 which are  cell-preserving maps, are isometries (hence homeomorphisms onto their images) preserve the decorations and degree of the vertices.
\end{defn}
A cell-preserving map extends to an isometry on cells.
An isometry on cells, does not necessarily extend to an isometry.
But it does on a small region. We show this result in the following lemma.

\begin{lem}\label{l:isometrywhenphirestricted}
Let $\phi:B(x,\varepsilon^{-1},L)\to L'$, $\phi':B(x',\varepsilon^{-1},L')\to L$ be  continuous cell-preserving maps such that the restriction to each cell is an isometry and $d(\phi(x),x')\le\varepsilon$ and $d(\phi'(x'),x)\le\varepsilon$ and $\phi\circ\phi'=id=\phi'\circ\phi$ on their intersection. Then $\phi$ restricted to $B(x,\varepsilon^{-1}/6,L)$ is an isometry.
\end{lem}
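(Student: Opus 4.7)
The plan is to prove the two inequalities of the isometry separately, in each case producing a path in the target whose length bounds the corresponding side of $d_{L'}(\phi(u),\phi(v)) = d_L(u,v)$ for arbitrary $u,v \in B(x,\varepsilon^{-1}/6,L)$. Throughout I would read $B(y,r,M)$ as the closed $d$-ball of radius $r$, which is the natural meaning when the center is not a vertex.

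First I would handle the forward inequality. Choose a $d$-geodesic $\gamma$ from $u$ to $v$ in $\Laff$, whose length equals $d_L(u,v) \le d(u,x)+d(x,v) \le \varepsilon^{-1}/3$. For every $y \in \gamma$, the triangle inequality gives $d(x,y) \le d(x,u)+d(u,y) \le \varepsilon^{-1}/6 + \varepsilon^{-1}/3 = \varepsilon^{-1}/2$, so $\gamma$ lies inside the domain $B(x,\varepsilon^{-1},L)$ of $\phi$. Because $\phi$ is continuous, cell-preserving, and an isometry on each cell, the image path $\phi \circ \gamma$ has the same length as $\gamma$, yielding $d_{L'}(\phi(u),\phi(v)) \le d_L(u,v)$.

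For the reverse inequality I would run the symmetric argument with $\phi'$, but first I must check that $\phi(u),\phi(v)$ and a geodesic joining them in $\Laff'$ all lie inside the domain $B(x',\varepsilon^{-1},L')$ of $\phi'$. Applying the bound just obtained to the pair $(u,x)$ gives $d_{L'}(\phi(u),\phi(x)) \le d_L(u,x) \le \varepsilon^{-1}/6$, and hence $d_{L'}(\phi(u),x') \le \varepsilon^{-1}/6 + \varepsilon$; here I invoke the cutoff $\varepsilon \le 1/\sqrt 2$ implicit in the definition of $d$ on $\Omega$, which is equivalent to $\varepsilon \le \varepsilon^{-1}/2$, so this bound is at most $2\varepsilon^{-1}/3$, and symmetrically for $\phi(v)$. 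Letting $\gamma'$ be a $d$-geodesic from $\phi(u)$ to $\phi(v)$ in $\Laff'$, its length is at most $\varepsilon^{-1}/3$ by the forward inequality, so every $y' \in \gamma'$ satisfies $d(y',x') \le \varepsilon^{-1}/3 + 2\varepsilon^{-1}/3 = \varepsilon^{-1}$. Thus $\gamma'$ sits inside the domain of $\phi'$, and $\phi' \circ \gamma'$ is a path of the same length whose endpoints are $\phi'(\phi(u)) = u$ and $\phi'(\phi(v)) = v$ by the hypothesis $\phi'\circ\phi = \mathrm{id}$ on the intersection of the domains.

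The hard part is not any single estimate but that the specific constant $1/6$ is forced by the need to fit two geodesics --- one in $\Laff$ and one in $\Laff'$ --- inside balls of radius $\varepsilon^{-1}$. A factor of $1/3$ already suffices for the first containment, but ensuring that the return geodesic also fits requires one additional halving, which is permitted exactly because of the cutoff $\varepsilon \le 1/\sqrt 2$. A subsidiary point to be careful about is length-preservation of $\phi$ along paths: it requires decomposing $\gamma$ into finitely many pieces each inside a single closed cell, which is legitimate for a geodesic in a locally finite piecewise-Euclidean complex but deserves a brief remark before one quotes it.
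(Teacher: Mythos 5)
Your argument is correct and follows essentially the same route as the paper's proof: bound the length of a geodesic between two points of the small ball, check that it stays inside the domain of $\phi$, push it forward cell by cell, and then repeat with $\phi'$ and the identity $\phi'\circ\phi=\mathrm{id}$ for the reverse inequality, with the cutoff $\varepsilon\le 1/\sqrt2$ supplying the bound $\varepsilon\le\varepsilon^{-1}/2$ in exactly the same place. The only difference is cosmetic: where the paper first establishes the claim $\phi(B(x,\varepsilon^{-1}/6,L))=B(\phi(x),\varepsilon^{-1}/6,L')$ in order to locate the image points near $x'$, you get the needed estimate $d(\phi(u),x')\le\varepsilon^{-1}/6+\varepsilon$ directly from the forward inequality applied to the pair $(u,x)$, which slightly streamlines the argument without changing its substance.
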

\begin{proof}
First notice that if $y,z\in B(x,\varepsilon^{-1},L)$ and $p$ is a path inside this ball from $y$ to $z$, then since $\phi$ is an isometry on each cell, $\phi(p)$ is a path from $\phi(y)$ to $\phi(z)$ of same length. However, $\phi(y)$ or $\phi(z)$ or the path is not necessarily inside the ball $B(x',\varepsilon^{-1},L')$. This is the reason why we look at a smaller ball.
Moreover, $B(\phi(x), \varepsilon^{-1}/6,L')\subset B(x', \varepsilon^{-1}/6+\varepsilon,L')\subset B(x',\varepsilon^{-1},L')$, as the distance between $\phi(x)$ and $x'$ is at most $\varepsilon\le 1/\sqrt2$.
  We claim that $\phi(B(x,\varepsilon^{-1}/6,L))=B(\phi(x),\varepsilon^{-1}/6,L')$.\\
     Indeed if $y\in B(x,\varepsilon^{-1}/6,L)$ then there is a path $p$ of length smaller than $\varepsilon^{-1}/6$ inside this ball from $y$ to $x$; since $\phi$ is an isometry on each cell, $\phi(p)$ is a path from $\phi(y)$ to $\phi(x)$ of same length. Hence $\phi(B(x,\varepsilon^{-1}/6,L))\subset B(\phi(x),\varepsilon^{-1}/6,L')$.\\
  Since  $B(\phi(x),\varepsilon^{-1}/6,L')\subset B(x',\varepsilon^{-1},L')$, $\phi'\phi(x)=x$.\\
  If $z\in B(\phi(x),\varepsilon^{-1}/6,L')$ then there is a path $p$ of length smaller than $\varepsilon^{-1}/6$ inside this ball from $z$ to $\phi(x)$; since $\phi'$ is an isometry on each cell, $\phi'(p)$ is a path from $\phi'(z)$ to $\phi'(\phi(x))=x$ of same length. Thus $\phi'(z)\in B(x,\varepsilon^{-1}/6,L)$. Thus we have found an element $\phi'(z)\in B(x,\varepsilon^{-1}/6,L)$ whose image $\phi(\phi'(z))$ is $z$. That is, $B(\phi(x),\varepsilon^{-1}/6,L')\subset \phi(B(x,\varepsilon^{-1}/6,L))$, and proves our claim.

  Let $y,z\in B(x,\varepsilon^{-1}/6,L)$. Then there is a path from $y$ to $z$ going through  $x$ of length at most $2(\varepsilon^{-1}/6)=\varepsilon^{-1}/3$. Thus, any path $p$ of minimal length from $y$ to $z$ is contained in the large ball $B(x,\varepsilon^{-1},L)$ because any path from $y$ to the boundary of this large ball is at least $\varepsilon^{-1}-\varepsilon^{-1}/6=5\varepsilon^{-1}/6$.
  Since $\phi$ is an isometry on each cell, $\phi(p)$ is a path from $\phi(y)$ to $\phi(z)$ of same length.
    Hence, $d(x,y)\ge d(\phi(x),\phi(y))$ and $\phi(y),\phi(z)\in\phi(B(x, \varepsilon^{-1}/6,L))=B(\phi(x), \varepsilon^{-1}/6,L')\subset B(x', \varepsilon^{-1}/6+\varepsilon,L')$.\\
  A path $q$ of minimal length between $\phi(y)$ and $\phi(z)$ is also contained in the large ball $B(x',\varepsilon^{-1},L')$ because the distance from $\phi(y)$ to the boundary of the large ball  $B(x',\varepsilon^{-1},L')$ is at least $\varepsilon^{-1}-(\varepsilon^{-1}/6+\varepsilon)$ which is greater than $\varepsilon^{-1}/3$ if $\varepsilon<1/\sqrt 2$.  Since $\phi'$ is an isometry on each cell, $\phi'( q)$ is a path from $\phi'\phi(y)$ to $\phi'\phi(z)$ of same length. Hence $d(\phi(x),\phi(y))\ge d(\phi'(\phi(x)),\phi'(\phi(y)))=d(x,y)$.

\end{proof}

\begin{figure}
\centering
  \includegraphics[scale=.9]{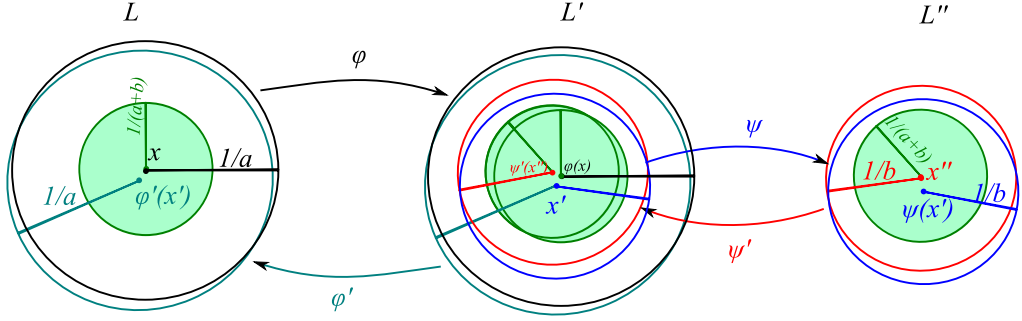}\\
  \caption{Triangle inequality.}\label{f:triangleinequalityOmega}
\end{figure}

\begin{lem}
  The map $d:\Omega\times \Omega\to \R_+$ is a metric.
\end{lem}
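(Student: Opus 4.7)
The plan is to verify well-definedness on isomorphism classes and the four metric axioms. Well-definedness on representatives is immediate: pre- and post-composing $\phi,\phi'$ with the isomorphisms witnessing the class leaves $\Lambda$ unchanged. Non-negativity is built into the definition. The reflexive identity $d(\cisom{L,x},\cisom{L,x})=0$ follows from taking $\phi=\phi'=\mathrm{id}$, which puts every $\varepsilon>0$ into $\Lambda$. Symmetry is immediate from the symmetric roles of $(L,x)$ and $(L',x')$ in the definition of $\Lambda$.

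For the triangle inequality, the plan is to show $\varepsilon_1+\varepsilon_2\in\Lambda_{13}$ whenever $\varepsilon_1\in\Lambda_{12}$ and $\varepsilon_2\in\Lambda_{23}$ with $\varepsilon_1,\varepsilon_2\le 1/\sqrt{2}$ (the remaining case $\varepsilon_1+\varepsilon_2>1/\sqrt 2$ is trivial since $d\le 1/\sqrt 2$). Set $\phi_{13}:=\phi_{23}\circ\phi_{12}$ on $B(x_1,1/(\varepsilon_1+\varepsilon_2),L_1)$. The key estimate is that the composition is defined there: for $y$ in this ball,
$$ d_{L_2}(\phi_{12}(y),x_2) \le d_{L_2}(\phi_{12}(y),\phi_{12}(x_1))+\varepsilon_1 \le \tfrac{1}{\varepsilon_1+\varepsilon_2}+\varepsilon_1 \le \tfrac{1}{\varepsilon_2}, $$
where the final inequality reduces to $\varepsilon_2(\varepsilon_1+\varepsilon_2)\le 1$, guaranteed by the cap. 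The composite inherits cell-, decoration-, and degree-preservation, is itself an isometry (composition of isometries on their respective balls), and satisfies $d_{L_3}(\phi_{13}(x_1),x_3)\le\varepsilon_1+\varepsilon_2$ by a further application of the triangle inequality in $L_3$. The symmetric argument produces $\phi_{31}:=\phi_{21}\circ\phi_{32}$; taking infima yields the triangle inequality.

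For coincidence of indiscernibles, suppose $d(\cisom{L,x},\cisom{L',x'})=0$ and pick $\varepsilon_n\downarrow 0$ with corresponding maps $\phi_n:B(x,1/\varepsilon_n,L)\to L'$. For each vertex $v\in L$, the images $\phi_n(v)$ are vertices in a bounded ball of $L'$ (by the isometry property and $\phi_n(x)\to x'$), hence in a finite set; diagonalizing over the countable vertex set of $L$ extracts a subsequence along which $\phi_n(v)$ is eventually constant for every $v$. This defines $\Phi$ on the $1$-skeleton, and the cell-by-cell isometry data extend it to a cell-preserving isometric embedding $\Phi:L\to L'$ with $\Phi(x)=x'$, preserving decorations and vertex degrees. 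Since $L$ and $L'$ are both CW-complexes homeomorphic to $\R^2$, invariance of domain makes $\Phi(L)$ open in $L'$; $\Phi$ is proper (isometry property), so $\Phi(L)$ is closed; connectedness of $L'$ then forces $\Phi(L)=L'$, so $\Phi$ is an isomorphism witnessing $\cisom{L,x}=\cisom{L',x'}$. The main obstacle is precisely this paragraph: extracting a global isomorphism from the sequence of local isometric embeddings requires both the diagonal/compactness argument and the topological upgrade from embedding to surjection.
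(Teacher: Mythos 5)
Your proposal is correct, and the two halves of it relate to the paper differently. The triangle inequality argument is essentially the paper's: both compose the two pairs of maps and reduce everything to the single estimate $\tfrac{1}{\varepsilon_1+\varepsilon_2}+\varepsilon_1\le\tfrac{1}{\varepsilon_2}$, which the cap at $\tfrac{1}{\sqrt2}$ is designed to guarantee (you are in fact slightly more careful than the paper in handling non-attained infima and the trivial case $\varepsilon_1+\varepsilon_2>\tfrac{1}{\sqrt2}$). The coincidence-of-indiscernibles step, however, is where you genuinely diverge. The paper does not rebuild the isomorphism from scratch: it picks a vertex $v$ nearest to $x$, uses the decoration hypothesis (``no two same tiles with exterior decoration are found next to each other'') to pin down $\phi_n(v)$ as a single vertex $v'$ independent of $n$, concludes $d'(\cisom{L,v},\cisom{L',v'})=0$, and then invokes the fact from the companion paper that $d'$ is a metric on $\Xi$ to get a combinatorial isomorphism, which it finally upgrades to the pointed continuous statement. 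You instead run a pigeonhole/diagonal extraction over the countable vertex set to stabilize $\phi_n$ on every vertex, assemble a cell-preserving isometric embedding $\Phi:L\to L'$, and prove surjectivity by an open-plus-closed argument in $L'\cong\R^2$. Your route is self-contained (it does not lean on the decoration rigidity or on the discrete-hull metric result) at the cost of more topology; the paper's route is shorter but imports more. Two small points to tighten in yours: the properness of $\Phi$ is better justified by injectivity plus cell-preservation and local finiteness of $L'$ (the preimage of a finite union of closed cells is a finite union of closed cells) than by ``the isometry property,'' since before surjectivity is known $\Phi$ is only $1$-Lipschitz globally; and you should note explicitly that $\Phi(x)=x'$ follows because $\Phi$ and $\phi_{n_k}$ eventually agree on the closed cell containing $x$, a cell-preserving isometry of a pentagon being determined by the images of its vertices.
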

\begin{proof}
  The map $d$ is by definition symmetric and positive.
  If the distance $d(\cisom{L,x},\cisom{L',x'})=0$ then they both agree on balls of any radius $n>1$ centered at $x$ and $x'$ and the distance between the centers is at most $1/n$.
  That is, there exist maps $\phi_n:B(x,n,L)\to L'$, $\phi'_n:B(x',n,L')\to L$, which are  cell-preserving maps, are isometries, preserve the decorations and degree of the vertices, and such that $d(\phi_n(x),x')\le 1/n$, $d(\phi_n'(x'),x)\le 1/n$.
  Let $v$ be one of the closest vertices of $x$, and let $v'_n:=\phi_n(v)$.
  Since $d(v_n',x')\le d(v_n',\phi_n(x))+ d(\phi_n(x),x')\le 1.62+1/n$, and no two same tiles with exterior decoration are found next to each other, $v_n'$ is independent of $n$ so we let $v':=v_n'$.
  Since $\phi_n$ is cell-preserving and $d(x,v)<1.62$, the restriction  $\phi_n:B(v,n-2,L)\to L'$ is cell preserving with $\phi_n(v)=v'$.
  Similarly, since $\phi_n'$ is cell-preserving and $d(v',x')\le 1.62+1/n$, we have $d(\phi_n'(v'),x)\le d(\phi_n'(v'),\phi_n'(x'))+d(\phi_n'(x'),x) \le 1.62+2/n$ so $\phi_n'(v')=v$ and the restriction  $\phi'_n:B(v',n-2,L')\to L$ is cell-preserving.
  Thus $d'(\cisom{L,v},\cisom{L',v'})\le 1/(n-2)$, and so $d'(\cisom{L,v},\cisom{L',v'})=0$.
  Thus $\cisom{L,v}=\cisom{L',v'}$ combinatorially.
  Thus, $(\Laff,v)\cong (\Laff',v')$ and let $\Psi$ denote the isomorphism.  Since $\phi_n(x)$ converges to $x'$  and $\Psi$ restricted to a smaller ball
  $B(v,(n-2)/6,L)$ is $\phi_n$ for large $n$,
   $\cisom{L,x}=\cisom{L',x'}$.

  We will now show that $d$ satisfies the triangle inequality.
  Suppose the distance $d(\cisom{L,x},\cisom{L',x'})=a$, $d(\cisom{L',x'},\cisom{L'',x''})=b$.
  By definition, there exist isometries
  \begin{eqnarray*}
   && \phi:B(x,1/a,L)\to L',\quad d(\phi(x),x')\le a\\
   && \phi':B(x',1/a,L')\to L, \quad d(\phi'(x'),x)\le a\\
   && \psi:B(x',1/b,L')\to L'',\quad d(\psi(x'),x'')\le b\\
   && \psi':B(x'',1/b,L'')\to L',\quad d(\psi'(x''),x')\le b.
  \end{eqnarray*}
  Without loss of generality assume that $a\le b\le 1/\sqrt 2$. Furthermore, we can even assume $a+b\le 1/\sqrt 2 $ otherwise the triangle inequality automatically holds.
  We claim that
  \begin{eqnarray*}
   && \psi\phi:B(x,1/(a+b),L)\to L'',\quad d(\psi\phi(x),x'')\le a+b\\
   && \phi'\psi':B(x'',1/(a+b),L'')\to L'',\quad d(\phi'\psi'(x''),x)\le a+b.
  \end{eqnarray*}
  Hence $$d(\cisom{L,x},\cisom{L'',x''})\le d(\cisom{L,x},\cisom{L',x'})+d(\cisom{L',x'},\cisom{L'',x''}).$$ We now prove our claim.
  By the triangle inequality, we have $$d(\psi\phi(x),x'')\le d(\psi\phi(x),\psi(x'))+d(\psi(x'),x'')\le a+b.$$
  Also by the triangle inequality we have $d(\phi'\psi'(x''),x)\le d(\phi'\psi'(x''),\phi'(x'))+d(\phi'(x'),x)\le a+b$.
  We only need to show that $\phi(B(x,1/(a+b),L))\subset B(x',1/b,L')$ and that $\psi'(B(x'',1/(a+b),L''))\subset B(x',1/a,L')$. See Figure \ref{f:triangleinequalityOmega}.
  If $y\in B(x,1/(a+b),L)$ then $d(\phi(y),\phi(x))=d(y,x)\le 1/(a+b)$ and $d(\phi(y),x')\le d(\phi(y),\phi(x))+d(\phi(x),x')\le 1/(a+b)+a\le 1/b$, as $a\le b\le 1/\sqrt 2$.
  If $z\in B(x'',1/(a+b),L'')$ then $d(\psi'(z),\psi'(x''))=d(z,x'')\le1/(a+b)$ and $d(\psi'(z),x')\le d(\psi'(z),\psi'(x''))+d(\psi'(x''),x')\le 1/(a+b)+b\le 1/a$ as  $a\le b\le 1/\sqrt 2$.
  This proves our claim.
\end{proof}

The reason why we require $\phi$ in the definition of $d$ to be an isometry is to have an easy proof of the triangle inequality.

We denote with $d'$ the combinatorial metric on $\Xi$, and with $d$ the metric on $\Omega$.
The restriction of $d$ to $\Xi$ induces a metric on $\Xi$, which, by the following lemma, is equivalent to the combinatorial one.

\begin{thm}\label{t:dequivalenttodponOmega}
  The discrete metric $d'$ on $\Xi$ is equivalent to the metric $d$ on $\Omega$ restricted to $\Xi$. More precisely  $d'\le d \le 18d'$.
\end{thm}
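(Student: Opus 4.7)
The plan is to prove the two inequalities separately. The bridge in each case is Corollary~\ref{c:dballcontainedindpball}, which relates the metric $d$-ball used to define the continuous metric on $\Omega$ to the combinatorial subcomplex ball $B(v,n,L)$ used to define the discrete metric on $\Xi$.

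For the direction $d'\le d$, fix $\varepsilon$ slightly greater than $d(\cisom{L,v},\cisom{L',v'})$, so that there exist cell-preserving, cell-wise isometric, decoration- and degree-preserving maps $\phi\colon B_d(v,1/\varepsilon,L)\to L'$ and $\phi'\colon B_d(v',1/\varepsilon,L')\to L$ with $d(\phi(v),v'),\,d(\phi'(v'),v)\le\varepsilon\le 1/\sqrt2$. By the decoration-uniqueness property used in the preceding proof that $d$ is a metric (``no two same tiles with exterior decoration lie next to each other''), the vertex $\phi(v)$ within $d$-distance $1/\sqrt2$ of $v'$ must in fact equal $v'$, and similarly $\phi'(v')=v$. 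By Corollary~\ref{c:dballcontainedindpball}, every vertex of the subcomplex $B(v,1/\varepsilon,L)$ lies in $B_{d'}(v,1/\varepsilon,L)\subset B_d(v,1/\varepsilon,L)$, so $\phi$ restricts to a cell-preserving combinatorial isomorphism on $B(v,1/\varepsilon,L)$ sending $v$ to $v'$, and similarly for $\phi'$. Hence the same $\varepsilon$ is admissible for $d'$, and letting $\varepsilon\to d(\cisom{L,v},\cisom{L',v'})$ yields $d'\le d$.

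For the direction $d\le 18d'$, put $a:=d'(\cisom{L,v},\cisom{L',v'})$. If $18a\ge 1/\sqrt2$ the claim is immediate since $d\le 1/\sqrt2$; otherwise pick $\varepsilon\in(a,1/(18\sqrt2))$. By admissibility for $d'$ there is a cell-preserving combinatorial isomorphism $\phi\colon B(v,1/\varepsilon,L)\to L'$ with $\phi(v)=v'$, and via the piecewise affine structure $\phi$ extends canonically to a cell-wise isometric map of the geometric realizations. To conclude that $18\varepsilon$ is admissible for the continuous metric, the only nontrivial point is that the extended $\phi$ is defined on the metric ball $B_d(v,1/(18\varepsilon),L)$ (the basepoint bound $d(\phi(v),v')=0\le 18\varepsilon$ is free). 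Any $p\in B_d(v,1/(18\varepsilon),L)$ lies in some regular pentagonal tile $T$ of diameter $(1+\sqrt5)/2$, so every vertex $w$ of $T$ satisfies $d(v,w)\le 1/(18\varepsilon)+(1+\sqrt5)/2$; by Theorem~\ref{t:dledple3donLaff},
\[
d'(v,w)\le 3d(v,w)\le \frac{1}{6\varepsilon}+\frac{3(1+\sqrt5)}{2}\le \frac{1}{\varepsilon},
\]
the last inequality using $\varepsilon<1/(18\sqrt2)$. Hence $T\subset B(v,1/\varepsilon,L)$ and $\phi$ is defined on $T$; $\phi'$ is treated identically. Thus $18\varepsilon\in\Lambda$ for the continuous metric and $d\le 18\varepsilon$, so $\varepsilon\to a$ gives $d\le 18d'$.

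The delicate step is the geometric containment in the second direction: the ``inner'' combinatorial subcomplex $B(v,1/\varepsilon,L)$ is smaller than the metric ball $B_d(v,1/\varepsilon,L)$ by roughly one tile-diameter $(1+\sqrt5)/2$, and combining this shrinkage with the factor~$3$ of Theorem~\ref{t:dledple3donLaff} and the cap $1/\sqrt2$ on $d$ is what produces the constant~$18$.
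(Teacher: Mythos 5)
Your first direction ($d'\le d$) matches the paper's argument: the metric ball contains the combinatorial ball by Corollary~\ref{c:dballcontainedindpball}, and the vertex $\phi(v)$, being a vertex within distance $\varepsilon<1$ of the vertex $v'$, must equal $v'$. That part is fine.

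The second direction has a genuine gap. The definition of the metric $d$ on $\Omega$ requires the comparison maps $\phi,\phi'$ to be \emph{isometries} on their domains, not merely cell-preserving maps that are isometric on each cell. You write that ``the only nontrivial point is that the extended $\phi$ is defined on the metric ball $B_d(v,1/(18\varepsilon),L)$,'' but that is precisely not the only nontrivial point: a cell-wise isometric map defined on a ball need not preserve distances between points of that ball, because a shortest path between two points (or between their images in $L'$) may leave the domain (or the image), allowing $d(\phi(y),\phi(z))<d(y,z)$. The paper flags this explicitly just after the definition of $d$ (``an isometry on cells does not necessarily extend to an isometry, but it does on a small region''), and this is the entire content of Lemma~\ref{l:isometrywhenphirestricted}, which is where the factor $6$ --- and hence the constant $18=3\cdot 6$ --- actually comes from: $B_{d}(v,1/(3\varepsilon),L)\subset B(v,1/\varepsilon,L)$ by Corollary~\ref{c:dballcontainedindpball}, and the lemma then says $\phi$ restricted to the ball of one sixth that radius, $B_d(v,1/(18\varepsilon),L)$, is a genuine isometry. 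Your constant $18$ instead emerges from an unrelated tile-diameter slack computation, which only establishes the domain containment; without invoking Lemma~\ref{l:isometrywhenphirestricted} (or reproving its path-confinement argument) you have not shown $18\varepsilon\in\Lambda$. The gap is repairable, since your radius $1/(18\varepsilon)$ happens to be exactly one sixth of $1/(3\varepsilon)$, so the lemma applies verbatim --- but as written the isometry requirement is never verified.
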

\begin{proof}
  If $d(\cisom{L,v},\cisom{L',v'})=1/n$ then they agree on a $d$-ball of radius $n$ with  $v$ mapped to $v'$ as the distance between the two vertices is at most $1/n$. But a $d$-ball of radius $n$ contains a $d'$-ball of radius $n$, so $d'(\cisom{L,v},$ $\cisom{L',v'})\le 1/n$. Therefore $d'\le d$.
  If $d'(\cisom{L,v},\cisom{L',v'})=1/n$ then there exists a cell-preserving map $\phi:B(v,n,L)\to L'$ and $\phi':B(v',n,L')\to L$ such that $\phi(v)=v'$ and $\phi'(v')=v$ and such that $\phi$, $\phi'$ restricted to each cell is an isometry. But a $d'$-ball of radius $n$ contains a $d$-ball of radius $n/3$, and $d(\phi(v),v')=0$, $d(v,\phi'(v'))=0$. By Lemma \ref{l:isometrywhenphirestricted}, the restriction of $\phi$ to the ball $B(v,n/18,L)$ is an isometry,  and so $d(\cisom{L,v},\cisom{L',v'})\le1/(n/18)=18/n$. So $d\le 18d'$.
\end{proof}

\begin{thm}\label{t:ContinuousHullCompact}
  The space $(\Omega,d)$ is compact.
\end{thm}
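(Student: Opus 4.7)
The plan is to prove sequential compactness by combining three ingredients: compactness of the discrete hull $\Xi$ (a Cantor space by \cite{MRSdiscretehull}), the equivalence of $d'$ and $d$ on $\Xi$ (Theorem~\ref{t:dequivalenttodponOmega}), and the fact that each point of $\Laff$ lies within bounded distance of a vertex. Throughout set $\rho:=(1+\sqrt 5)/2$, the circumradius of a regular unit pentagon, so any $x\in\Laff$ is within $d$-distance $\rho$ of some vertex of $L$.

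Given a sequence $([L_n,x_n])_n\subset\Omega$, pick for each $n$ a vertex $v_n\in L_n$ with $d_{L_n}(x_n,v_n)\le\rho$. The pairs $[L_n,v_n]$ lie in $\Xi$, so by compactness of $\Xi$ we may pass to a subsequence with $[L_n,v_n]\to[L,v]$ in $\Xi$; by Theorem~\ref{t:dequivalenttodponOmega} the convergence holds in $(\Omega,d)$ as well. Fix $\varepsilon_k\downarrow 0$ with $\varepsilon_k^{-1}>\rho+1$ and thin further so that $d(\cisom{L_{n_k},v_{n_k}},\cisom{L,v})<\varepsilon_k$. The definition of $d$ on $\Omega$ then supplies, for each $k$, a cell-preserving isometry $\phi_k\colon B(v_{n_k},\varepsilon_k^{-1},L_{n_k})\to L$ preserving decorations and vertex-degrees, with $d_L(\phi_k(v_{n_k}),v)\le\varepsilon_k$, together with an analogous inverse $\phi'_k$. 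Set $y_k:=\phi_k(x_{n_k})\in L$; by the global isometry property this satisfies $d_L(y_k,v)\le\rho+\varepsilon_k$. Hence all $y_k$ lie in the closed ball $\overline B(v,\rho+1,L)\subset\Laff$, which is a finite union of closed regular unit pentagons and therefore compact; extract a further subsequence with $y_k\to y$ in $\Laff$, and set $x:=y$.

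To verify $[L_{n_k},x_{n_k}]\to[L,x]$ in $\Omega$, let $\delta\in(0,1/\sqrt 2)$ and choose $k$ so large that $\varepsilon_k<\delta/2$, $\varepsilon_k^{-1}>\delta^{-1}+\rho$, and $d_L(y_k,x)<\delta/2$. Then $B(x_{n_k},\delta^{-1},L_{n_k})\subset B(v_{n_k},\varepsilon_k^{-1},L_{n_k})$, so the restriction of $\phi_k$ to the smaller ball is still a cell-preserving isometry into $L$ preserving decorations and vertex-degrees, with
\[
d_L(\phi_k(x_{n_k}),x)\le d_L(y_k,x)<\delta;
\]
the same radius check applied to $\phi'_k$ yields the reverse map. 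Thus $\delta\in\Lambda$ and $d(\cisom{L_{n_k},x_{n_k}},\cisom{L,x})\le\delta$, completing the proof of sequential compactness.

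The main obstacle is the bookkeeping of three nested radii: the outer ball $B(v_{n_k},\varepsilon_k^{-1},L_{n_k})$ on which convergence in $\Xi$ supplies the cell-preserving isometry, the $\rho$-displacement of the marked point $x_{n_k}$ from $v_{n_k}$, and the $\delta^{-1}$-ball about $x_{n_k}$ demanded by the definition of $d$ on $\Omega$. The three must be aligned so that the restriction map has the correct domain while $v_{n_k}$ and $x_{n_k}$ sit safely in the interior; once arranged, the remainder is a routine Cantor diagonal extraction.
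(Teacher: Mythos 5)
Your proposal is correct and follows essentially the same route as the paper's own proof: anchor each marked point $x_n$ to a nearby vertex $v_n$, use compactness of $\Xi$ together with Theorem~\ref{t:dequivalenttodponOmega} to converge the vertex-pointed tilings, push the marked points forward by the resulting cell-preserving isometries into a fixed compact ball of the limit tiling $L$, and extract a further convergent subsequence there before checking the nested-radius containments. (A minor slip: $(1+\sqrt{5})/2$ is the diagonal length, not the circumradius, of a unit regular pentagon, but it is still a valid upper bound on the distance from a point to a nearest vertex, so nothing in the argument is affected.)
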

\begin{proof}
Let $\cisom{L_n,x_n}$ be a sequence in $\Omega$.
Let $\cisom{L_n,v_n}$ be a sequence in $\Omega$ such that $d(x_n,v_n)< 2$.
Since $\Xi$ is compact, there is a convergent subsequence $\cisom{L_{n_j},v_{n_j}}$ converging to some $\cisom{L,v}$ in the discrete metric $d'$, but since $d'$ is equivalent to the continuous metric $d$, the subsequence is also converging to $\cisom{L,v}$ in the continuous metric $d$.
Hence for a given $0<\varepsilon<1/3$ there is a sequence of isometry maps
$\phi_j:B(v_{n_j},1/\varepsilon,L_{n_j})\to (L,v)$ such that $d(\phi_j(v_{n_j}),v)<\varepsilon$ for all $j\ge N_\varepsilon$ for some $N_\varepsilon\in\N$.
By the triangle inequality, $d(\phi_j(x_{n_j}),v)< \varepsilon +2$.
So the sequence $\{\phi_j(x_{n_j})\}$ lies in a compact subset of $L$, and therefore there is a convergent subsequence $\phi_{j_k}(x_{n_{j_k}})\to x$.
Hence, for a given $0<\tilde\varepsilon<1/3$ we have $d(\phi_{j_k}(x_{n_{j_k}}),x)<\tilde\varepsilon$ for all $k\ge M_{\tilde\varepsilon}$ for some $M_{\tilde\varepsilon}$.
If $\tilde\varepsilon=\varepsilon/(1-2\varepsilon)$ then $B(x_{n_{j_k}},1/\tilde\varepsilon,L_{n_{j_k}})\subset B(v_{n_{j_k}},1/\varepsilon,L_{n_{j_k}})$ because for any $y\in B(x_{n_{j_k}},1/\tilde\varepsilon,L_{n_{j_k}})$ we have $d(y,v_{n_{j_k}})\le d(y,x_{n_{j_k}})+d(x_{n_{j_k}},v_{n_{j_k}})<1/\varepsilon-2+2<1/\varepsilon$.
Hence we can restrict $\phi_{{j_k}}$ to the ball $B(x_{n_{j_k}},1/\tilde\varepsilon,L_{n_{j_k}})$  for all $k\ge \max(N_\varepsilon,M_\varepsilon)$.
Since $d(x,v)\le d(x,\phi_{j_k}(x_{n_{j_k}}))+d(\phi_{j_k}(x_{n_{j_k}}),v)\le \tilde\varepsilon +\varepsilon +2$, $\phi'_{j_k}(x)$ exists.
Since $\phi_{j_k}(x_{n_{j_k}})\to x$, we have $d(\phi_{j_k}'(x),x_{n_{j_k}})\le \tilde\varepsilon$ for large $k$.
It follows that, $d(\cisom{L_{n_{j_k}},x_{n_{j_k}}},\cisom{L,x})\to 0$. Hence the space $(\Omega,d)$ is compact.
\end{proof}

\section{The  substitution map $\omega$ on $\Omega$}
We have seen that if $L$ is a combinatorial tiling locally isomorphic to $K$, then $\omega(L)$ is also a combinatorial tiling locally isomorphic to $K$.
Each tile $t$ of $L$ was replaced with a supertile $\omega(t)$.
Thus there is a homeomorphism from $\Laff$ to $\omega(L)_{\text{aff}}$ mapping each tile $t\in \Laff$ homeomorphically to the supertile $\omega(t)\subset\omega(L)_{\text{aff}}$.
There are of course many choices for the homeomorphism $\omega:t\to \omega(t)$, but we choose one that is affine. Among the affine homeomorphisms we choose  one that induces an invariant measure, which will be explained later.
\begin{figure}
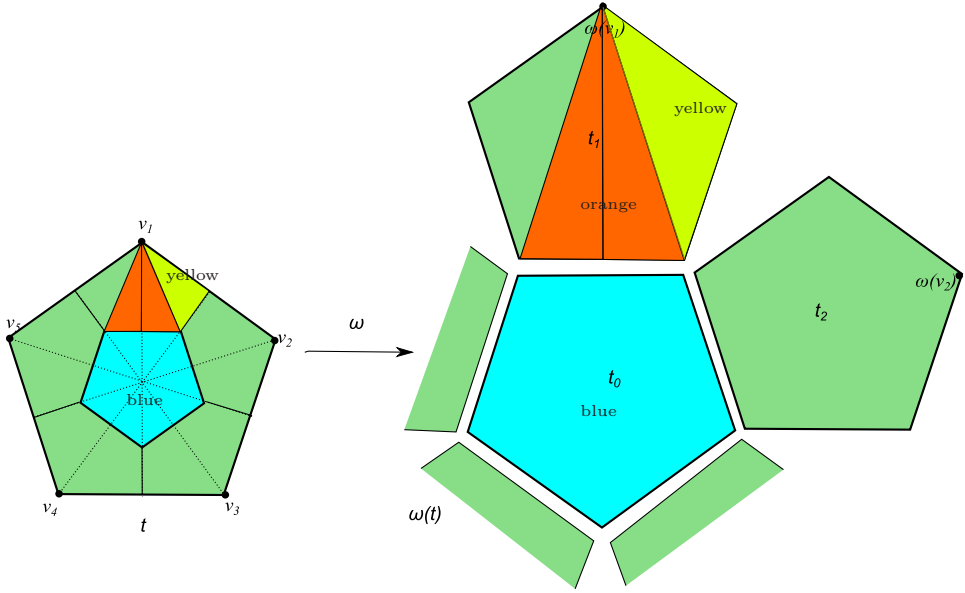

  \centering
  \begin{overpic}[scale=.5]{./pics/omegacontinuoushull}
  \put(215,144){\tiny orange}
  \put(250,180){\tiny yellow}
  \put(215,66){\tiny blue}
  \put(45,70){\tiny blue}
  \put(60,117){\tiny yellow}
  \end{overpic}
  \caption{The substitution map on a tile $t$. The yellow triangle is mapped onto the yellow triangle, and the orange one onto the orange one. The pentagon $\omega^{-1}(t_0)$ is a regular pentagon of side-length $s=\frac{1}{2} (\sqrt{5}-\sqrt{4-\sqrt{5}})=0.45$.}\label{f:omegacontinuoushull}
\end{figure}
\begin{defn}[\index{$\omega$ on a tile}$\omega$ on a tile]
We define the map  $\omega:t\to \omega(t)$ as  the homeomorphism that maps the Euclidean pentagon $t$ affinely to the half-dodecahedron $\omega(t)$ as in Figure \ref{f:omegacontinuoushull} carrying the decoration of $t$ to $\omega(t)$.
 In this figure, $t$ is partitioned into 6 smaller pentagons, such that $t$ (without decorations) has six rotations around the center of the pentagon and six reflections along the line going through a vertex and the midpoint of the opposite side; the blue pentagon is mapped to the blue pentagon by dilation by $2$; the orange and yellow triangles are mapped to the orange and yellow triangles linearly, respectively; by rotation and reflection of these triangles we define the map  on the rest of the tile $t$.
 \end{defn}

 \begin{defn}[\index{$\omega$ on $\Laff$}$\omega$ on $\Laff$]
 We define $\omega:\Laff\to\omega(L)_{\text{aff}}$ by defining it on each tile $t\in\Laff$ with the above affine map $\omega:t\to\omega(t)$.
 \end{defn}
 This map is well-defined on $\Laff$ because if $e$ is a common edge to two tiles $t,t'\in\Laff$ then $\omega(t\cap\{x\})=\omega(t'\cap\{x\})$ for all $x\in e$. See Figure \ref{f:omegacontinuoushullreflection}.
 The map $\omega:t\to\omega(t)$ induces a partition on $t$, namely $\{w^{-1}(c)\mid c \text{ a cell of } w(t) \}$, which is how we defined the map in the first place.
 This partition is a reflection of $\{w^{-1}(c)\mid c \text{ a cell of } w(t') \}$ along $e$, just as  $t$ is a reflection of $t'$ along $e$.
 See Figure \ref{f:omegacontinuoushullreflection}.
 The homeomorphism $\omega:\Laff\to \omega(L)_{\text{aff}}$ is locally affine but it is not cell-preserving, for each tile is mapped to six tiles.
\begin{figure}
  \centering
  \includegraphics[scale=.2]{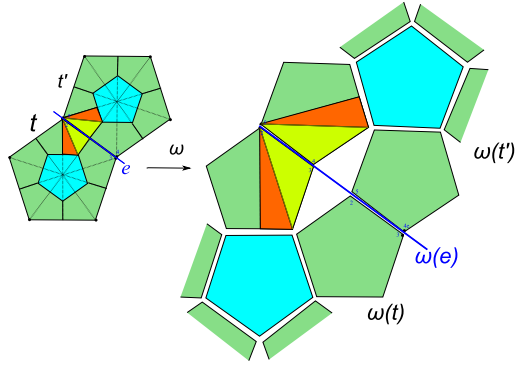}\\
  \caption{The substitution map $\omega:\Laff\to\omega(L)_{\text{aff}}$ is well defined on boundaries of tiles of $\Laff$.}\label{f:omegacontinuoushullreflection}
\end{figure}
\begin{defn}[\index{$\omega$ on $\Omega$}$\omega$ on $\Omega$]
We define the map $\omega:\Omega\to \Omega$ by
$$\omega(\cisom{L,x}):=\cisom{\omega(L),\omega(x)},$$
where $x\in\Laff$ and $\omega(x)\in\omega(L)_{\text{aff}}$.

\end{defn}
This map is well-defined for if $(L,x)$ is isomorphic to $(L',x')$ with $\phi$ denoting the cell-preserving isometric-on-each-cell isomorphism from the first one to the latter, then $\omega\circ\phi\circ\omega^{-1}:\omega(L)_{\text{aff}}\to \omega(L')_{\text{aff}}$ mapping $\omega(x)$ to $\omega(x')$ is as well cell-preserving isometric-on-each-cell isomorphism.

The map $\omega:\Omega\to\Omega$ we call it a substitution map (and not a subdivision map) because the size of the pentagons remains the same, namely unit regular pentagons.
Let $P,Q$ be patches of $K$. If $x\in P$, we call $(P,x)$ a pointed patch, or a patch with origin. We write  $(P,x)\sqsubseteq(Q,y)$  if there exists a cell-preserving isomorphism(onto its image) $\phi:P\to Q$ such that $\phi(x)=y$. By Proposition 1.29 in \cite{MRSdiscretehull} the isomorphism is unique.

\begin{figure}
  \centering
  \includegraphics[scale=.8]{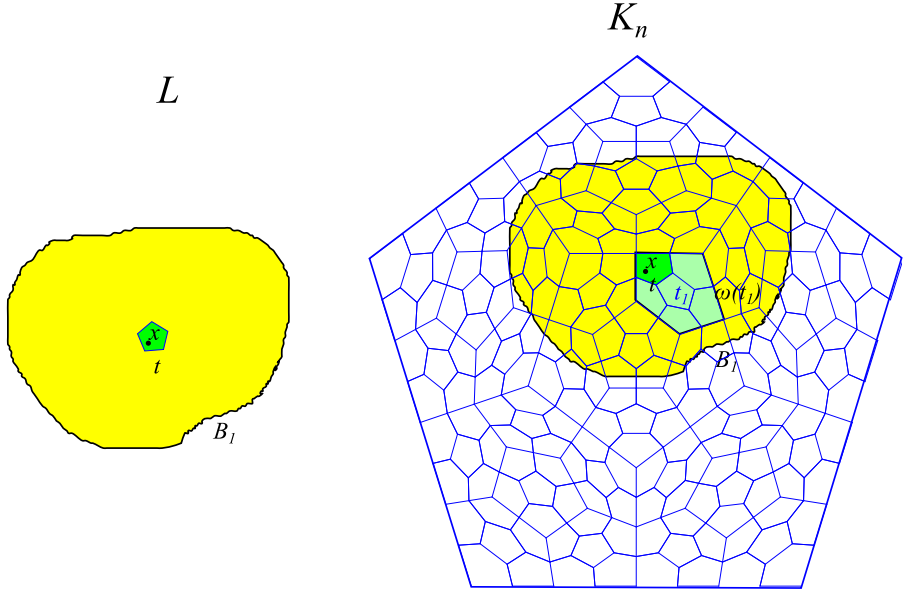}\\
  \caption{For each pointed tile $(t,x)\in L$ there is a pointed supertile $\omega(t_1,x_1)$ containing it.}\label{f:increasingsupertilesequence}
\end{figure}
\begin{pro}\label{p:increasingseqsupertiles}
  If $\cisom{L,x}\in \Omega$, then there exists an increasing sequence of pointed supertiles contained in $(L,x)$. More precisely,
  $$(t,x)\sqsubseteq\omega(t_1,x_1)\sqsubseteq\omega^2(t_2,x_2)\sqsubseteq\omega^3(t_3,x_3)\sqsubseteq\cdots\sqsubseteq(L,x).$$
\end{pro}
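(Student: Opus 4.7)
The plan is to induct on $n$, producing at each step a nested level-$n$ supertile $P_n \subset L$ containing $x$, and then reading off $(t_n,x_n)$ as the abstract pentagon with $\omega^n(t_n)\cong P_n$ cell-preservingly together with the point whose $\omega^n$-image corresponds to $x$ under that identification. The input I rely on is two-fold: (i) the direct-limit description $K = \lim K_m$ with $K_m = \omega^m(K_0)$, which endows $K$ with a canonical hierarchy of supertiles of all finite levels (each tile of $K_{m-k}$ being blown up by $\omega^k$ inside $K_m$); and (ii) the hypothesis that $L$ is locally isomorphic to $K$, which lets me pull finite patches of $K$ back into $L$.

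For the base case I take $t_0$ to be any closed pentagonal tile of $L$ containing $x$; then $(t_0,x)\sqsubseteq(L,x)$ is immediate. For the inductive step, given the image $P_n\subset L$ of the embedding produced at stage $n$, I first enlarge to a patch $\hat P \subset L$ containing $P_n$ together with enough surrounding layers --- specifically, more than the edge-diameter of any level-$(n+1)$ supertile --- and apply local isomorphism to produce a cell-preserving isomorphism $\phi:\hat P\to\hat Q$ onto a patch $\hat Q\subset K$. Inside $K$, the image $\phi(P_n)$ is a level-$n$ supertile, hence contained in a unique level-$(n+1)$ supertile $R_{n+1}\subset K$; the size choice on $\hat P$ guarantees $R_{n+1}\subset\hat Q$, so the pre-image $P_{n+1}:=\phi^{-1}(R_{n+1})\subset L$ is a level-$(n+1)$ supertile of $L$ containing $P_n$. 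Defining $t_{n+1}$ and $x_{n+1}$ so that $\omega^{n+1}(t_{n+1})\cong P_{n+1}$ cell-preservingly and $\omega^{n+1}(x_{n+1})\mapsto x$ under this identification, the required containment $(t_n,x_n)\sqsubseteq\omega(t_{n+1},x_{n+1})$ follows automatically from $P_n\subset P_{n+1}$.

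The main obstacle I anticipate is justifying the hierarchical structure of $K$: that every finite patch of $K$ sits inside a canonical nested sequence of supertiles, so that the level-$(n+1)$ supertile containing a given level-$n$ supertile is well defined independently of the ambient $K_m$ in which it is detected. This reduces to the check that the embeddings $\iota_m:K_m\hookrightarrow K_{m+1}$ respect the substitution structure --- i.e.\ carry level-$k$ supertiles of $K_m$ to level-$k$ supertiles of $K_{m+1}$ for each $k\le m$ --- which should be automatic from the construction in \cite{MRSdiscretehull} but warrants explicit mention. A secondary, minor point is verifying that successive identifications $P_n\cong\omega^n(t_n)$ fit together so that the nested inclusion $P_n\subset P_{n+1}$ realizes $t_n$ as one of the six sub-tiles of $\omega(t_{n+1})$; this follows from the uniqueness of cell-preserving isomorphisms between pointed patches (Proposition 1.29 of \cite{MRSdiscretehull}, cited above).
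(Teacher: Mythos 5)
Your proposal follows essentially the same route as the paper: the paper likewise takes a ball $B_n$ around $x$ of radius $3r_n$ (three times the maximal diameter of a level-$n$ supertile), uses local isomorphism to place it inside a supertile of $K$, and extracts from that supertile's hierarchical decomposition a level-$n$ supertile containing the previously constructed one, concluding by induction. The only differences are cosmetic --- the paper centers its enlargements at $x$ rather than around $P_n$ --- and your explicit flagging of the coherence point (that the image of $P_n$ under the patch isomorphism into $K$ is genuinely a supertile of $K$'s hierarchy, so that a next-level supertile containing it exists) names a subtlety the paper passes over in silence with the phrase ``since the ball $B_n$ is large enough.''
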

\begin{proof}
  Since $x\in L$, there is a tile $t\in L$ containing $x$, so $(t,x)\in (L,x)$.
  Let $r_1:=\max(\{diam(\omega(p))\mid p \text{ prototile}\})$ be the diameter of the largest supertile $\omega(p)$ among all prototiles $p$.
  Let $B_1$ be the ball in $L$ of center $x$ and radius $3r_1$.
  Since $B_1$ is in particular a patch, it is contained in a supertile. Hence $B_1$ is covered by supertiles $\omega(t')$, and since the ball $B_1$ is large enough, a supertile $\omega(t_1)$ contains $t$ and is contained in $L$. Hence $(t,x)\subset \omega(t_1,x_1)\subset (L,x)$. See Figure \ref{f:increasingsupertilesequence}.\\
  Let $r_2:=\max(\{diam(\omega^2(p))\mid p \text{ prototile}\})$ be the diameter of the largest supertile $\omega^2(p)$ among all prototiles $p$.
  Let $B_2$ be the ball in $L$ of center $x$ and radius $3r_2$.
  Since $B_2$ is in particular a patch, it is contained in a supertile. Hence $B_2$ is covered by supertiles of degree 2, and since the ball $B_2$ is large enough, a supertile $\omega^2(t_2)$ contains $\omega(t_1)$ and is contained in $L$. Hence $(t,x)\sqsubseteq \omega(t_1,x_1)\sqsubseteq \omega^2(t_2,x_2)\sqsubseteq (L,x)$.
  By induction, we obtain the increasing sequence.
\end{proof}
  \begin{figure}
    \centering
    \includegraphics[scale=.9,viewport=100 130 300 300,clip]{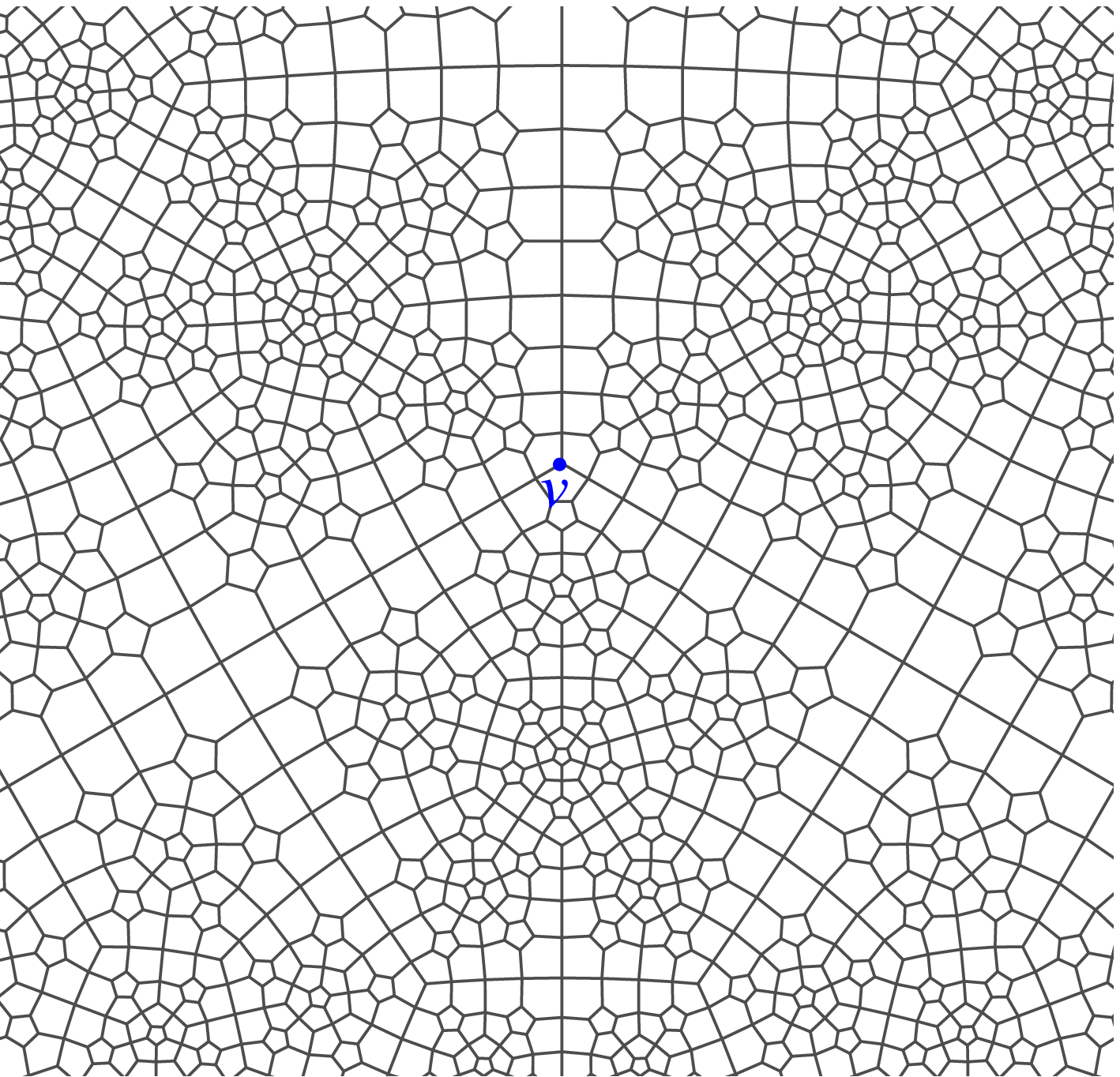}
    \caption{A tiling with central vertex $v$, which is locally isomorphic to $K$.}
    \label{f:fixedpointsofw03}
  \end{figure}
The union $\union_n (\omega^n(t_n,x_n))$ is contained in  $(L,x)$,  but it is not necessarily the whole $(L,x)$.
For example, the pointed tiling $(L,v)$ shown in Figure \ref{f:fixedpointsofw03} has the increasing sequence $(t_0,v_0)\sqsubseteq \omega(t_1,v_1)\sqsubseteq \omega^2(t_2,v_2)\sqsubseteq \cdots\sqsubseteq (L,v)$, where $t_i$ is a prototile that has two vertices of degree $4$ and $v_i$ is the vertex of degree 3 between these two vertices (the vertex $v_i$ is shown in this figure as $v$). Their union is just a third of $(L,v)$.

By definition, for each tiling $\cisom{L,x}\in \Omega$, $L$ is locally isomorphic to $K$.
Informally, this means that every patch of $L$ is contained in some supertile $K_n$.
A natural question arises. Is every supertile $K_n$ in $L$? The following lemma shows that the answer is yes.

\begin{thm}
  If $\cisom{L,x}\in \Omega$, then any supertile $K_n$ is a subcomplex of $L$. In particular $K$ is locally isomorphic to $L$.
\end{thm}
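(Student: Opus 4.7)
The plan is to combine the increasing sequence of supertiles supplied by Proposition \ref{p:increasingseqsupertiles} with a primitivity property of the substitution $\omega$. The first key ingredient I would establish is that $\omega$ is primitive: there exists an integer $m\ge 1$ such that for every prototile $p$, the supertile $\omega^m(p)$ contains a subtile cell-preserving isomorphic to the prototile $K_0$. This can be read off directly from the subdivision rule of Figure \ref{f:subdivisionmapnewdecorated} after finitely many iterations, since every prototile type appears inside every supertile once one subdivides enough times. It is the only additional fact beyond Proposition \ref{p:increasingseqsupertiles} that the argument needs.

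Given this, fix $n\ge 0$ and apply Proposition \ref{p:increasingseqsupertiles} at level $n+m$ to obtain an inclusion $\omega^{n+m}(t_{n+m},x_{n+m})\sqsubseteq (L,x)$ for some prototile $t_{n+m}$. Factor $\omega^{n+m}(t_{n+m})=\omega^n(\omega^m(t_{n+m}))$ and invoke primitivity: there is a tile $t_0$ inside $\omega^m(t_{n+m})$ which is cell-preserving isomorphic to $K_0$. Applying $\omega^n$ to this sub-pentagon produces a subcomplex $\omega^n(t_0)\cong \omega^n(K_0)=K_n$ of $\omega^{n+m}(t_{n+m})$, and hence of $L$. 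This proves the first assertion.

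For the ``in particular'' clause, recall that $K=\lim_{n\to\infty}K_n$ is the direct limit of the embeddings $\iota_n:K_n\hookrightarrow K_{n+1}$, so every patch $P$ of $K$ is contained in some $K_n$. By what we just proved $K_n\sqsubseteq L$, so $P$ is cell-preserving isomorphic to a patch of $L$, which is exactly the statement that $K$ is locally isomorphic to $L$. The only real obstacle is the primitivity of $\omega$; once that is granted the rest is a mechanical consequence of Proposition \ref{p:increasingseqsupertiles} and the construction $K=\lim K_n$.
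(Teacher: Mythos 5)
Your proof is correct and follows essentially the same route as the paper's: Proposition \ref{p:increasingseqsupertiles} to produce arbitrarily high-order supertiles inside $L$, combined with primitivity of the subdivision rule. The only cosmetic difference is that you invoke a uniform primitivity exponent $m$ and apply the proposition directly at level $n+m$, whereas the paper first pigeonholes over the finitely many prototiles to find a single prototile $t$ with $\omega^n(t)$ occurring in $L$ for every $n$; both reductions are immediate.
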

\begin{proof}
  By Proposition \ref{p:increasingseqsupertiles}, there is an increasing sequence of supertiles in $(L,x)$.
  Since we have a finite number of prototiles, there is one prototile that repeats infinitely many times, say the prototile $t$, i.e. $\omega^n(t)$ appears somewhere in $L$ for any $n$.
  Since the subdivision rule is primitive, there is an integer $k$ such that $\omega^k(t)$ contains all the prototiles.
  Hence $\omega^{kn}(t)$ contains any supertile of degree $n$.
\end{proof}

\begin{lem}\label{l:OmegaInjective}
The substitution map $\omega:\Omega\to \Omega$ on the continuous hull $\Omega$ is injective.
\end{lem}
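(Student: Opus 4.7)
Plan. Suppose $\omega(\cisom{L,x}) = \omega(\cisom{L',x'})$. By definition of $\omega$ on $\Omega$, there is a decoration-preserving, cell-preserving, isometric-on-each-cell isomorphism $\Phi:\omega(L)_{\text{aff}}\to\omega(L')_{\text{aff}}$ sending $\omega(x)$ to $\omega(x')$. I want to manufacture an isomorphism $\phi:\Laff\to L'_{\text{aff}}$ with $\phi(x)=x'$, whence $\cisom{L,x}=\cisom{L',x'}$. The natural candidate is the tile-by-tile composition
$$\phi := \omega^{-1}\circ\Phi\circ\omega,$$
which is a homeomorphism of the underlying topological spaces because each tile-restriction $\omega:t\to\omega(t)$ is a homeomorphism between regular unit pentagons. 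The real content is that $\phi$ has to preserve the cell structure at the level of $L$ and $L'$ (whose 2-cells are the \emph{big} pentagons), not merely at the level of $\omega(L)$ and $\omega(L')$.

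The crux is therefore to show that $\Phi$ respects the supertile partition: for each tile $t\in L$ there is a tile $t'\in L'$ with $\Phi(\omega(t))=\omega(t')$. This is the \emph{recognizability} of the substitution rule $\omega$, which for the decorated combinatorial pentagonal tiling is read off from the decorations used in Figure~\ref{f:subdivisionmapnewdecorated}: inside any supertile $\omega(t)$ the six constituent tiles and their decorations are arranged in a pattern that can appear in the decorated tiling in only one way, so that the partition of $\omega(L)$ into supertiles is intrinsic to the decorated combinatorial data. Since $\Phi$ is cell-preserving and decoration-preserving, it must carry this intrinsic partition of $\omega(L)$ to the corresponding one of $\omega(L')$, giving the required bijection $t\mapsto t'$ between tiles of $L$ and $L'$. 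I expect this recognizability step to be the main obstacle, and I would appeal to the uniqueness statement in Proposition~1.29 of \cite{MRSdiscretehull} (together with the decoration convention) to formalise it.

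Granted recognizability, the assignment $t\mapsto t'$ extends to a cell-preserving bijection $\phi:L\to L'$ with $\omega\circ\phi=\Phi\circ\omega$. Since every tile in either complex is a unit regular Euclidean pentagon and $\phi|_t$ sends corners to corners and decorations to decorations (the latter coming from the decoration-preservation of $\Phi$ combined with the fact that the affine $\omega:t\to\omega(t)$ transports decorations in the same prescribed way on every tile), the restriction $\phi|_t$ is a rigid motion, hence an isometry. Thus $\phi:\Laff\to L'_{\text{aff}}$ is cell-preserving, decoration-preserving and isometric on each cell.

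Finally, I would verify the pointing condition. Let $t\in L$ be a tile with $x\in t$. Then $\omega(x)\in\omega(t)$ and $\omega(x')=\Phi(\omega(x))\in\Phi(\omega(t))=\omega(\phi(t))$, so $x'\in\phi(t)$. The map $\omega|_{\phi(t)}:\phi(t)\to\omega(\phi(t))$ is a bijection, and
$$\omega(\phi(x)) = \Phi(\omega(x)) = \omega(x'),$$
so $\phi(x)=x'$. Therefore $\cisom{L,x}=\cisom{L',x'}$ and $\omega$ is injective on $\Omega$.
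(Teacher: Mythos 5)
Your proof is correct and follows essentially the same route as the paper: both arguments hinge on the fact that the supertile partition of $\omega(L)$ is intrinsic to the decorated combinatorial data, so that the given isomorphism of $\omega(L)_{\text{aff}}$ with $\omega(L')_{\text{aff}}$ descends to an isomorphism of $\Laff$ with $\Laff'$, which is then checked to carry $x$ to $x'$ using that $\omega$ is a homeomorphism on each tile. The one adjustment I would make is at your recognizability step: rather than re-deriving it from the decorations together with Proposition~1.29 of \cite{MRSdiscretehull} (whose uniqueness statement alone does not yield intrinsicness of the partition), cite the injectivity of $\omega:\Xi\to\Xi$ established in \cite{MRSdiscretehull}, which is exactly the fact you need and is what the paper's proof invokes; your write-up is otherwise more explicit than the paper's about verifying $\phi(x)=x'$.
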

\begin{proof}
  Suppose $\omega(\cisom{L,x})=\omega(\cisom{L',x'})$ then $\omega(L,x)$ and $\omega(L',x')$ are isomorphic, and since the isomorphism is cell-preserving, $\omega(L,v)\cong\omega(L',v')$ for some vertices $v,v'$.  Since $\omega:\Xi\to\Xi$ is injective, $(L,v)\cong(L',v')$.
  Since this isomorphism induces a cell-preserving isometric-on-each-cell isomorphism between  $\Laff $ and $\Laff'$ and $\omega$ on a tile is a homeomorphism,   $(\Laff,x)\cong(\Laff',x')$.
\end{proof}

\subsection{On the partition of a regular pentagon}

\begin{lem}\label{l:matrixfortriangles}
  If $x$ and $y$ are nonzero nonparallel vectors of $\R^2$ positioned at the origin, then we obtain the triangle $T_1$ with vertices at $0,x,y$.
  Let $x',y'$ be another two nonzero nonparallel vectors of $\R^2$ positioned at the origin,  and let $T_2$ be the triangle with vertices at $0,x',y'$.
  Define the $2\times 2$ matrix $A:=[x' y'][x y]^{-1}$, where $x,y,x',y'$ are  column vectors.
  Then the linear map $f:T_1\to T_2$ defined by $f(v)=Av$ is bijective.
  Moreover, $||Av||\le ||A||\cdot ||v||$, where $||A||=\sqrt{\lambda_{\max}(A^*A)}$, with $\lambda_{\max}(A^*A)$ being the maximum eigenvalue of the matrix $A^*A$, and $A^*$ the conjugate transpose of the matrix $A$.
\end{lem}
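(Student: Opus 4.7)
The plan is to reduce everything to standard linear algebra on $\R^2$. First I would verify that $A$ is well defined: since $x$ and $y$ are nonzero and nonparallel they are linearly independent, so the matrix $[x\ y]$ has nonzero determinant and is invertible. Then $A = [x'\ y'][x\ y]^{-1}$ makes sense, and by construction $A[x\ y] = [x'\ y']$, i.e.\ $Ax = x'$ and $Ay = y'$. Similarly $[x'\ y']$ is invertible because $x',y'$ are nonzero and nonparallel, so $A$ has nonzero determinant and is invertible, with $A^{-1} = [x\ y][x'\ y']^{-1}$ satisfying $A^{-1}x' = x$ and $A^{-1}y' = y$.

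Next I would identify points of the triangles as convex combinations. Every $v \in T_1$ has a unique representation $v = \alpha x + \beta y$ with $\alpha,\beta \ge 0$ and $\alpha + \beta \le 1$, because $T_1$ is the convex hull of $0, x, y$ and $\{x,y\}$ is a basis. By linearity,
\[
f(v) = A(\alpha x + \beta y) = \alpha A x + \beta A y = \alpha x' + \beta y',
\]
which is a point of $T_2$ with the same barycentric coordinates. Conversely, any $w \in T_2$ is uniquely $\alpha x' + \beta y'$ with $\alpha,\beta \ge 0$, $\alpha+\beta \le 1$, and $A^{-1}w = \alpha x + \beta y \in T_1$. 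This exhibits $f:T_1 \to T_2$ as a bijection (indeed, an affine isomorphism preserving barycentric coordinates).

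Finally, the norm inequality is just the definition of the operator (spectral) $2$-norm. One has $\|A\|^2 = \sup_{v \neq 0}\|Av\|^2/\|v\|^2$, and since $A^*A$ is positive semi-definite and symmetric, the Rayleigh quotient $\langle A^*A v, v\rangle / \langle v,v\rangle$ attains its maximum $\lambda_{\max}(A^*A)$. Thus $\|Av\|^2 = \langle A^*Av,v\rangle \le \lambda_{\max}(A^*A)\|v\|^2$, which gives $\|Av\| \le \sqrt{\lambda_{\max}(A^*A)}\,\|v\| = \|A\|\,\|v\|$.

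There is no real obstacle here; the statement is a packaging of elementary facts. The only thing to be a bit careful about is confirming that a bijection of the triangles (as sets, not merely of vertex triples) really follows, which is why I would emphasize the barycentric-coordinate argument rather than just the vertex correspondence.
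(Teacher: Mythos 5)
Your proof is correct and follows essentially the same route as the paper: parametrize the triangle, use linearity of $A$ and invertibility of $[x\ y]$ and $[x'\ y']$ to get the bijection, and invoke the standard characterization of the spectral norm for the inequality. If anything, your barycentric parametrization $\{\alpha x+\beta y:\alpha,\beta\ge 0,\ \alpha+\beta\le 1\}$ is the more careful one (the paper's parameter set $\{(s,t)\mid 0\le t\le s\le 1\}$ actually describes the triangle with vertices $0,x,x+y$), but the argument is the same in substance.
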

\begin{proof}
  By definition of $A$ we have $x'=Ax$ and $y'=Ay$. Hence $s x'+ t y'= s Ax + t Ay=A( s x+ t y)$ for any $s,t\in\R$.
  Restrict $s,t$ to the triangle $P:=\{(s,t)\mid 0\le t\le s \le 1\}$.  Then $T_1=[x\,\, y]P$ and $T_2=[x'\,\, y']P=A[x\,\, y]P=AT_1$.
  Since the matrix $A$ is invertible, the map $f$ is bijective.
  The last statement in the lemma follows from the fact that the Euclidean norm induces the matrix norm $||A||=\max\{\frac{||Av||}{||v||}\mid v\ne 0\}=\sqrt{\lambda_{\max}(A^*A)}$.
\end{proof}
\begin{figure}
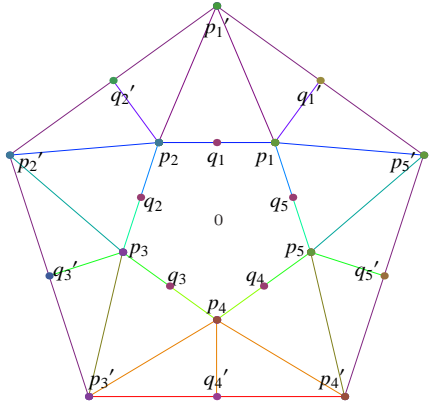

\centering
  \begin{overpic}[scale=.7]{./pics/coordinatesofpartitionoft}
    \put(90,90){\tiny{0}}
  \end{overpic}
  \caption{Partition of a regular pentagon of side length $1$.}\label{f:numbersofomega}
\end{figure}
The sizes of the colored  regions of the pentagon $t$ in Figure \ref{f:omegacontinuoushull} can be computed from the coordinates $p_1,q_1,p_1',q_1'$ defined below and shown in Figure \ref{f:numbersofomega}. The rest of the coordinates can be computed by rotating them by $2\pi/5$; for instance $p_2$ is the rotation of $p_1$ by $2\pi/5$ around the origin.
\begin{eqnarray*}
\begin{array}{l}
 p_1:=(\frac{1}{4} (\sqrt{5}-\sqrt{4-\sqrt{5}}),\frac{1}{4} (-\sqrt{2+\frac{3}{\sqrt{5}}}+\sqrt{5+2 \sqrt{5}})) \\
 q_1:=(0,\frac{1}{4} (-\sqrt{2+\frac{3}{\sqrt{5}}}+\sqrt{5+2 \sqrt{5}})) \\
 p_1':=(0,\sqrt{\frac{1}{10} (5+\sqrt{5})}) \\
 q_1':=(\frac{1}{8} (1+\sqrt{5}),\frac{1}{8} \sqrt{10+\frac{22}{\sqrt{5}}}).
\end{array}
\end{eqnarray*}
By Lemma \ref{l:matrixfortriangles}, the matrices mapping the small blue, orange, yellow regions  in Figure \ref{f:omegacontinuoushull} to the large regions of same color, respectively, are
\begin{eqnarray*}
&& M_0:=\frac{||p_1'||}{||p_1||}RotationMatrix(\pi/5)=\left(
\begin{array}{cc}
 1.7821 & -1.29477 \\
 1.29477 & 1.7821 \\
\end{array}
\right)\\
&&M_1:=[p_4'-p_3',p_1'-p_3'][p_1-p_2,p_1'-p_2]^{-1}=\left(
\begin{array}{cc}
 2.2028 & 0 \\
 0 & 2.85906 \\
\end{array}
\right)\\
&&M_2:=[p_5'-p_4',p_1'-p_4'][q_1'-p_1,p_1'-p_1]^{-1}=\left(
\begin{array}{cc}
 1.91042 & -0.123303 \\
 0.899856 & 3.23855 \\
\end{array}
\right).
\end{eqnarray*}
The matrix norms induced by the Euclidean norm are
$$||M_0||=2.2028,\quad ||M_1||=2.85906,\quad ||M_2||=3.39406,$$
$$||M_0^{-1}||=||M_1^{-1}||=0.453968,\quad ||M_2^{-1}||=0.538918.$$
The determinants are  $\det M_0=4.85231$,  $\det M_1=   \det M_2=6.29792$.

\begin{lem}\label{l:domegacomparison}
  If $x,y\in \Laff$ then $$d(\omega(x),\omega(y))\le3.40\, d(x,y)  \text{ and } d(x,y)\le 0.54\, d(\omega(x),\omega(y)).$$
\end{lem}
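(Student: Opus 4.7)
The plan is to exploit that $\omega:\Laff \to \omega(L)_{\text{aff}}$ is piecewise affine with respect to the explicit partition of each tile shown in Figure \ref{f:omegacontinuoushull}, and that the linear parts of these affine pieces are exactly (up to pre- and post-composition with rotations and reflections of the regular pentagon, which are Euclidean isometries and therefore preserve operator norm) the matrices $M_0, M_1, M_2$ computed above, together with the factor-$2$ dilation acting on the central blue sub-pentagon. Since $\|M_i\|\le \|M_2\|\approx 3.39406 < 3.40$ and $\|M_i^{-1}\|\le \|M_2^{-1}\|\approx 0.538918 < 0.54$, the lemma should follow by estimating lengths of paths piece by piece.

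More precisely, I would fix $x,y\in\Laff$ and $\delta>0$, and choose a rectifiable path $\gamma:[0,1]\to\Laff$ from $x$ to $y$ with $\ell(\gamma)\le d(x,y)+\delta$. Since $\gamma([0,1])$ is compact it visits only finitely many tiles, each of which is partitioned into finitely many closed affine regions. After a harmless approximation of $\gamma$ by a piecewise geodesic of no greater length, I would subdivide $[0,1]$ into finitely many sub-intervals $[t_{i-1},t_i]$ such that each $\gamma([t_{i-1},t_i])$ lies in the closure of a single affine region, on which $\omega$ has linear part $A_i$ with $\|A_i\|\le 3.40$. By Lemma \ref{l:matrixfortriangles}, $\|A_i v\|\le \|A_i\|\cdot\|v\|$, so the length of $\omega\circ\gamma|_{[t_{i-1},t_i]}$ is at most $3.40$ times the length of $\gamma|_{[t_{i-1},t_i]}$. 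Summing over $i$ and letting $\delta\to 0$ yields $d(\omega(x),\omega(y))\le 3.40\, d(x,y)$.

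For the reverse inequality I would run the same argument for $\omega^{-1}$. Because $\omega$ restricted to each affine piece of $\Laff$ is an invertible affine map, $\omega^{-1}$ is well-defined and piecewise affine on $\omega(L)_{\text{aff}}$, with linear parts $A_i^{-1}$ of norm at most $0.54$. Choosing a nearly-shortest path from $\omega(x)$ to $\omega(y)$ in $\omega(L)_{\text{aff}}$, subdividing it into pieces each contained in a single image region, and summing the piecewise length estimates yields $d(x,y)\le 0.54\, d(\omega(x),\omega(y))$.

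The main technical nuisance I anticipate is justifying the finite subdivision of $\gamma$: a priori a rectifiable curve could cross the boundaries between affine pieces uncountably many times. The standard remedy, which I would invoke, is to replace $\gamma$ with a polygonal path inscribed in the sequence of crossing points (or more carefully, a polygonal approximation that lies in the same finite union of affine closures as $\gamma$ and has length not exceeding $\ell(\gamma)$). Such a polygonal path meets the boundaries of the affine regions at only finitely many parameter values, which makes the segment-by-segment length bound legitimate. The rest of the argument is then purely mechanical from the matrix-norm bounds listed before the statement of the lemma.
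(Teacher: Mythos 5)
Your proposal is correct and follows essentially the same route as the paper: decompose a (nearly) shortest path into segments each lying in a single affine region, bound each segment's image length by the operator norms $\|M_0\|,\|M_1\|,\|M_2\|\le 3.40$ (resp.\ $\|M_i^{-1}\|\le 0.54$ for the inverse direction) via Lemma \ref{l:matrixfortriangles}, and sum. The only difference is that you explicitly address the finiteness of the subdivision of the path, a point the paper's proof takes for granted by asserting that a shortest path is a finite union of line segments.
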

\begin{proof}
  We start by showing the first inequality. Consider the colored regions shown in Figure \ref{f:omegacontinuoushull}.
  If $x$ and $x'$ are in the small blue pentagon then $\omega(x)$ and $\omega(y)$ both lie in the large  blue pentagon and thus $d(\omega(x),\omega(y))=||M_0|| d(x,y)$ because the large blue pentagon is dilation of the small one by $||M_0||$.
  If $x$ and $x'$ are in the small orange triangle of $t$ then $\omega(x)$ and $\omega(y)$ both lie in the large orange triangle of $\omega(t)$.
  By Lemma \ref{l:matrixfortriangles},  $d(\omega(x),\omega(y))\le ||M_1||d(x,y)$.
  Similarly, for two points $x,y$ in the small yellow triangle, we have $d(\omega(x),\omega(y))\le ||M_2||d(x,y)$.
  Let $x,y\in \Laff$ and let $\gamma_1\cup\cdots\cup\gamma_n$ be a path of shortest length with endpoints $x,y$, such that each line-segment $\gamma_i$ lies in either a blue, orange, or yellow region. Then
  \begin{eqnarray*}
   d(\omega(x),\omega(y))&\le& length(\omega(\gamma_1))+\cdots+length(\omega(\gamma_n))\\
   &\le& max(||M_0||,||M_1||,||M_2||)d(x,y).
  \end{eqnarray*}
  Similarly, if $x',y'\in \omega(L)_{\text{aff}}$ then
  $$d(\omega^{-1}(x'),\omega^{-1}(y'))\le \max(||M_0^{-1}||,||M_1^{-1}||,||M_2^{-1}||) d(x',y'),$$ as a path of shortest length is composed of joined line-segments and linear maps send lines to lines.
\end{proof}

\subsection{More properties of the substitution map $\omega$ on $\Omega$}
\begin{lem}
  If $\cisom{L,x},\cisom{L',x'}\in\Omega$ then $$d(\omega(\cisom{L,x}),\omega(\cisom{L',x'}))\le 20.4d(\cisom{L,x},\cisom{L',x'}). $$
\end{lem}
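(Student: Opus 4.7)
The plan is to push the $\varepsilon$-approximation witnessing $d(\cisom{L,x},\cisom{L',x'})$ through $\omega$, absorbing a factor $3.40$ from Lemma~\ref{l:domegacomparison} and an additional factor $6$ from Lemma~\ref{l:isometrywhenphirestricted} when promoting the resulting cell-wise isometry into an honest isometry. Let $\varepsilon := d(\cisom{L,x},\cisom{L',x'})$; we may assume $20.4\,\varepsilon\le 1/\sqrt 2$, since otherwise the bound follows from $d\le 1/\sqrt 2$. For any $\eta>\varepsilon$ close enough to lie in $\Lambda$, pick witnesses $\phi:B(x,1/\eta,L)\to L'$ and $\phi':B(x',1/\eta,L')\to L$ from the definition of $d$ on $\Omega$, and set
\[
  \tilde\phi := \omega\circ\phi\circ\omega^{-1},\qquad \tilde\phi' := \omega\circ\phi'\circ\omega^{-1},
\]
on $\omega(B(x,1/\eta,L))$ and $\omega(B(x',1/\eta,L'))$ respectively.

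The first step I would carry out is to verify that $\tilde\phi$ is cell-preserving (for the unit-pentagon cells of $\omega(L)_{\text{aff}}$), isometric on each cell, and preserves decorations and vertex degrees. Any cell $c$ of $\omega(L)$ lies inside some supertile $\omega(t)$, and $\omega^{-1}(c)\subset t$ is one of the coloured regions in Figure~\ref{f:omegacontinuoushull}. Since $\phi$ is a decoration-preserving isometry of the unit pentagon $t$ onto $\phi(t)$, it carries this coloured region onto the corresponding region of $\phi(t)$, which $\omega$ then transports to the cell $c'\subset \omega(\phi(t))$ occupying the same position in the subdivision. Because $\omega|_t$ and $\omega|_{\phi(t)}$ are built from the very same matrices $M_0,M_1,M_2$ at corresponding positions, the stretching by $\omega^{-1}$ and the subsequent stretching by $\omega$ cancel (up to the isometric $\phi$ in between), so the induced map $c\to c'$ between two unit pentagons is a rigid motion.

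Next I would set $\tilde\varepsilon := 20.4\,\eta$ and check the hypotheses of Lemma~\ref{l:isometrywhenphirestricted} at the parameter $\varepsilon_0 := \tilde\varepsilon/6 = 3.40\,\eta$. The second inequality of Lemma~\ref{l:domegacomparison} gives
\[
  B(\omega(x),1/\varepsilon_0,\omega(L))\subset \omega\bigl(B(x,0.54/\varepsilon_0,L)\bigr)\subset \omega\bigl(B(x,1/\eta,L)\bigr),
\]
since $0.54/3.40<1$, so $\tilde\phi$ is well-defined on the larger ball required by the lemma, and analogously for $\tilde\phi'$. The first inequality of that lemma supplies the distance condition:
\[
  d\bigl(\tilde\phi(\omega(x)),\omega(x')\bigr)=d\bigl(\omega(\phi(x)),\omega(x')\bigr)\le 3.40\,d(\phi(x),x')\le 3.40\,\eta = \varepsilon_0,
\]
and likewise for $\tilde\phi'$. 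The mutual inverse condition on overlaps follows from the uniqueness of cell-preserving decoration-preserving isomorphisms between pointed patches.

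Lemma~\ref{l:isometrywhenphirestricted} then shows that $\tilde\phi$ restricted to $B(\omega(x),1/\tilde\varepsilon,\omega(L))$ is a genuine isometry, and the same for $\tilde\phi'$. Combined with $d(\tilde\phi(\omega(x)),\omega(x'))\le 3.40\,\eta\le\tilde\varepsilon$, this places $\tilde\varepsilon$ in the set $\tilde\Lambda$ defining $d(\omega(\cisom{L,x}),\omega(\cisom{L',x'}))$, so that this distance is at most $20.4\,\eta$; letting $\eta\searrow\varepsilon$ completes the proof. The main obstacle is the cell-preservation and cell-isometry check in the first step: it rests on the fact that the decoration rigidifies the cell-preserving isomorphism on each individual tile, ensuring that the inflation pattern is transported faithfully by $\phi$ and hence that the composite $\omega\circ\phi\circ\omega^{-1}$ genuinely restricts to a rigid motion on each new cell.
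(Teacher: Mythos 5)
Your proposal is correct and follows essentially the same route as the paper: conjugate the witnessing maps by $\omega$ to get $\omega\circ\phi\circ\omega^{-1}$, use the constants $3.40$ and $0.54$ from Lemma~\ref{l:domegacomparison} to control the displacement of the base point and the containment of the relevant balls, and then invoke Lemma~\ref{l:isometrywhenphirestricted} to upgrade the cell-wise isometry at the cost of the factor $6$, giving $6\times 3.4=20.4$. Your version is somewhat more careful than the paper's (approximating the infimum by $\eta>\varepsilon$, and explicitly checking cell-preservation and the mutual-inverse hypothesis), but these are refinements of the same argument rather than a different approach.
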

\begin{proof}
  Let $d(\cisom{L,x},\cisom{L',x'})=\varepsilon$.
  Then there exist embeddings
  $\phi:B(x,\varepsilon^{-1},L)\to L'$,\,\,\,\,\,
    $\phi':B(x',\varepsilon^{-1},L')\to L$,
    such that $d(\phi(x),x')<\varepsilon$, $d(\phi'(x'),x)<\varepsilon$.
    Thus $d(\omega(\phi(x)),\omega(x'))<3.4\varepsilon$, $d(\omega(\phi'(x')),\omega(x))<3.4\varepsilon$.
    We claim that $B(\omega(x),\frac{\varepsilon^{-1}}{3.4},\omega(L))\subset \omega(B(x,\varepsilon^{-1},L))$.\\
    Let $\omega(y)\in B(\omega(x),\frac{\varepsilon^{-1}}{3.4},\omega(L))$.
        Then $\tfrac{\varepsilon^{-1}}{3.4}\ge d(\omega(x),\omega(y))\ge \tfrac1{0.54} d(x,y)$.
    So  $d(x,y)< \varepsilon^{-1}$.
    Similarly, $B(\omega(x'),\frac{\varepsilon^{-1}}{3.4},\omega(L'))\subset \omega(B(x',\varepsilon^{-1},L'))$.\\
    Define $\psi:B(\omega(x),\frac{\varepsilon^{-1}}{3.4},\omega(L))\to \omega(L')$ by
    $\psi(y):=\omega(\phi(\omega^{-1}(y)))$.
    Define $\psi':B(\omega(x'),\frac{\varepsilon^{-1}}{3.4},\omega(L'))\to \omega(L)$ by
    $\psi'(y'):=\omega(\phi'(\omega^{-1}(y')))$.
    Since $\omega:\Laff\to \omega(L)_{\text{aff}}$ is a homeomorphism, $\psi$ and $\psi'$ are continuous and cell-preserving and the restriction to each cell is an isometry.
    By Lemma \ref{l:isometrywhenphirestricted} $\psi$ restricted to $B(\omega(x),\frac 16\cdot\frac{\varepsilon^{-1}}{3.4},\omega(L))$  and
    $\psi'$ restricted  to $B(\omega(x'),\frac16\cdot\frac{\varepsilon^{-1}}{3.4}$ $,\omega(L'))$  are isometries.
\end{proof}

\begin{cor}\label{c:Omegacontinuous}
  The substitution map $\omega:\Omega\to\Omega$ on the continuous hull $\Omega$ is continuous.
\end{cor}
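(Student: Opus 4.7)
The plan is to derive this corollary as an immediate consequence of the Lipschitz-type estimate just proved in the preceding lemma. Since that lemma shows
$$d(\omega(\cisom{L,x}),\omega(\cisom{L',x'}))\le 20.4\, d(\cisom{L,x},\cisom{L',x'})$$
for every pair $\cisom{L,x},\cisom{L',x'}\in \Omega$, the map $\omega$ is $20.4$-Lipschitz and hence (uniformly) continuous with respect to the metric $d$ on $\Omega$.

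Concretely, I would give the $\varepsilon$--$\delta$ argument in one line: given $\varepsilon>0$, set $\delta:=\varepsilon/20.4$; then $d(\cisom{L,x},\cisom{L',x'})<\delta$ implies $d(\omega(\cisom{L,x}),\omega(\cisom{L',x'}))<\varepsilon$ by the lemma. Alternatively one can simply remark that any Lipschitz map between metric spaces is continuous.

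There is no genuine obstacle here; all of the real work was done in the previous lemma, whose proof in turn relied on Lemma \ref{l:domegacomparison} to transfer the metric estimate from the piecewise affine level to the hull, and on Lemma \ref{l:isometrywhenphirestricted} to pass from isometries-on-each-cell to genuine isometries on a smaller ball. The only thing to check is that $20.4\cdot\delta<\varepsilon$, which is immediate.

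\begin{proof}
By the previous lemma, the map $\omega:\Omega\to\Omega$ satisfies $d(\omega(\cisom{L,x}),\omega(\cisom{L',x'}))\le 20.4\, d(\cisom{L,x},\cisom{L',x'})$ for all $\cisom{L,x},\cisom{L',x'}\in\Omega$. Thus $\omega$ is Lipschitz, and in particular continuous: for any $\varepsilon>0$, choosing $\delta:=\varepsilon/20.4$ guarantees that $d(\cisom{L,x},\cisom{L',x'})<\delta$ implies $d(\omega(\cisom{L,x}),\omega(\cisom{L',x'}))<\varepsilon$.
\end{proof}
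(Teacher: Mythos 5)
Your proof is correct and follows exactly the same route as the paper: the preceding lemma gives the Lipschitz bound with constant $20.4$, and Lipschitz maps between metric spaces are (uniformly) continuous. The explicit $\varepsilon$--$\delta$ step you add is fine but not needed beyond the one-line observation the paper itself makes.
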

\begin{proof}
By the lemma, $\omega:\Omega\to\Omega$ is Lipschitz continuous and so uniformly continuous. In particular, it is continuous.
\end{proof}

In \cite{MRSdiscretehull} we showed that $\omega$ on the discrete space $\Xi$ is not surjective. However, it is surjective on the continuous hull as the following lemma shows.
\begin{lem}\label{l:Omegasurjective}
The substitution map $\omega:\Omega\to \Omega$ on the continuous hull $\Omega$ is surjective.
\end{lem}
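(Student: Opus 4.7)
The plan is to show that $\omega(\Omega)$ is dense in $\Omega$. Since $\omega$ is continuous (Corollary~\ref{c:Omegacontinuous}) and $\Omega$ is compact (Theorem~\ref{t:ContinuousHullCompact}), $\omega(\Omega)$ is compact and hence closed, so density forces $\omega(\Omega)=\Omega$.

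To prove density, fix $\cisom{L,x}\in\Omega$ and $\varepsilon\in(0,1/\sqrt{2})$, and construct $\cisom{M,y}\in\Omega$ with $d(\omega(\cisom{M,y}),\cisom{L,x})\le\varepsilon$. The ball $B:=B(x,1/\varepsilon,L)$ is a finite patch of $L$, so by the local isomorphism of $L$ to $K$ there is a cell-preservingly isomorphic copy $B'$ inside $K$, with a distinguished point $x'\in B'$ corresponding to $x$. Since $K=\bigcup_N K_N$ with $K_N=\omega(K_{N-1})$, take $N$ large enough that $B'\subset K_N$ with ample buffer. Let $Q\subset K_{N-1}$ be the sub-patch of those tiles $s$ for which $\omega(s)\cap B'\ne\emptyset$; then $\omega(Q)\supset B'$. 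Define $y\in Q$ as the local preimage of $x'$ under the affine homeomorphism $\omega$ on the tile of $Q$ containing $x'$, and take $M$ to be any combinatorial tiling locally isomorphic to $K$ that contains $Q$ as a sub-patch (for instance $M=K$ itself, re-pointed to $y$).

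By construction $\omega(M)\supset\omega(Q)\supset B'$ and $\omega(y)=x'$ under the identification, so composing with the isomorphism $(B,x)\cong(B',x')$ yields a cell-preserving isometric embedding from a neighbourhood of $x$ in $L$ to a neighbourhood of $\omega(y)$ in $\omega(M)$, sending $x$ to $\omega(y)$. Restricting this to the relevant balls provides the maps $\phi,\phi'$ required by the definition of $d$ on $\Omega$, giving $d(\omega(\cisom{M,y}),\cisom{L,x})\le\varepsilon$.

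The main obstacle will be ensuring $\omega(Q)$ is thick enough around $\omega(y)$ to contain the entire ball $B(\omega(y),1/\varepsilon,\omega(M))$ needed for the inverse map $\phi'$; this is handled by Lemma~\ref{l:domegacomparison} (bounding the expansion factor of $\omega$) together with the freedom to start from a slightly larger ball than $B(x,1/\varepsilon,L)$ in the construction. The remaining cell-preservation and isometry-on-cells properties are immediate from $\omega$ being an affine homeomorphism on each tile.
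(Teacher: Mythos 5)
Your proposal is correct and is essentially the paper's argument: the paper also constructs approximating preimages $\cisom{K,x_n}$ by embedding the ball $B(x,n,L)$ into a supertile $\omega(P_n)\subset K$ and pulling $x$ back through $\omega$, then concludes by compactness of $\Omega$ and continuity of $\omega$ (extracting a convergent subsequence of the preimages rather than phrasing it as ``dense plus closed image,'' which is the same reasoning). The technical point you flag about the ball being thick enough for the inverse map is likewise implicit in the paper's claim $d(\omega(\cisom{K,x_{n_k}}),\cisom{L,x})\le 1/n_k$.
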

\begin{proof}
Let $\cisom{L,x}\in \Omega$. Let $B_n:=B(x,n,L)$ be the ball in $L$ of radius $n$ and center $x$.
By definition of $L$, the ball $B_n$ lies in a supertile $\omega^{m_n}(t_n)\subset K$. Defining $P_n:=\omega^{m_n-1}(t_n)$ we have $(B_n,x)\sqsubseteq (\omega(P_n),x)$. Let $x_n:=\omega^{-1}(x)$ be the point in $K$ such that $\omega(P_n,x_n)=(\omega(P_n),x)$.
Since $\Omega$ is compact, there is a subsequence $\{\cisom{K,x_{n_k}}\}_{k\in\N}$ converging to some element $\cisom{L',x'}\in\Omega$.
By construction, $d(\omega(\cisom{K,x_{n_k}}),\cisom{L,x})\le 1/{n_k}$.
Since $\omega$ is continuous,
$$\omega(\cisom{L',x'})=\omega(\lim_{k\to\infty}\cisom{K,x_{n_k}})=\lim_{k\to\infty}\omega(\cisom{K,x_{n_k}})=\cisom{L,x}.$$
\end{proof}

\begin{thm}\label{t:OmegaHomeomorphism}
  The substitution map $\omega:\Omega\to \Omega$ on the continuous hull $\Omega$ is a homeomorphism.
\end{thm}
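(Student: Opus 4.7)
My plan is to assemble this as a direct consequence of the results that immediately precede it. By Lemma \ref{l:OmegaInjective} the map $\omega:\Omega\to\Omega$ is injective, and by Lemma \ref{l:Omegasurjective} it is surjective, so $\omega$ is a bijection. By Corollary \ref{c:Omegacontinuous} it is continuous (in fact Lipschitz with constant $20.4$). Finally, by Theorem \ref{t:ContinuousHullCompact} the domain $\Omega$ is compact, and being a metric space it is automatically Hausdorff. Hence $\omega$ is a continuous bijection from a compact space to a Hausdorff space.

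The key step is to invoke the standard topological fact that such a map is automatically a homeomorphism: it suffices to show that $\omega$ is closed, i.e.\ that $\omega(C)$ is closed in $\Omega$ for every closed $C\subseteq\Omega$. Any closed subset of a compact space is compact, the continuous image of a compact set is compact, and a compact subset of a Hausdorff space is closed. Chaining these three facts shows that $\omega$ is closed, so $\omega^{-1}$ is continuous, and therefore $\omega$ is a homeomorphism.

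There is no real obstacle here, since all the hard work has been done in assembling compactness, bijectivity, and continuity of $\omega$. The proof is essentially a one-line citation of these previous results together with the compact-to-Hausdorff principle. If one wanted a slightly more self-contained argument, one could alternatively verify sequential continuity of $\omega^{-1}$ directly: given a convergent sequence $\omega(\cisom{L_n,x_n})\to\omega(\cisom{L,x})$ in $\Omega$, use compactness to extract a convergent subsequence $\cisom{L_{n_k},x_{n_k}}\to\cisom{L',x'}$, apply continuity of $\omega$ to conclude $\omega(\cisom{L',x'})=\omega(\cisom{L,x})$, and then use injectivity to deduce $\cisom{L',x'}=\cisom{L,x}$, so that the whole original sequence converges to $\cisom{L,x}$. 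Either route yields the theorem.
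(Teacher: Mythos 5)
Your proposal is correct and follows exactly the same route as the paper: it cites Theorem \ref{t:ContinuousHullCompact}, Corollary \ref{c:Omegacontinuous}, Lemma \ref{l:OmegaInjective} and Lemma \ref{l:Omegasurjective} to conclude that $\omega$ is a continuous bijection on a compact metric (hence Hausdorff) space and therefore a homeomorphism. The only difference is that you spell out the compact-to-Hausdorff closed-map argument, which the paper leaves implicit.
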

\begin{proof}
  By Theorem \ref{t:ContinuousHullCompact}, Corollary  \ref{c:Omegacontinuous}, Lemma \ref{l:OmegaInjective} and Lemma \ref{l:Omegasurjective},
  the substitution map $\omega:\Omega\to \Omega$ is a continuous bijective map on the compact space $\Omega$. Hence $\omega$ is a homeomorphism.
\end{proof}

\section{$\Xi$ transversal to $\Omega$.}

\begin{defn}[full transversal]
  Let $R$ be an equivalence relation on a space $X$. Then $Y\subset X$ is a transversal to $X$ if $[x]_R\cap Y$ is countable for any $x\in X$.
  It is said to be a \emph{full transversal} if  $[x]_R\cap Y\ne \emptyset$ for each $x\in X$.
\end{defn}
It follows that $R_Y:=\{(x,y)\subset Y\times Y\mid (x,y)\in R\}$  is an equivalence relation on $Y$, and if $Y$ is a full transversal then $Y$ contains at least one representative of each equivalence class $[x]_R$. Moreover, $[y]_R\cap [y]_{R_Y}=[y]_{R_Y}$ for any $y\in Y$.

\begin{defn}[equivalence relation $R$ on $\Omega$]\index{$R$ (equivalence relation)}
  Define the equivalence relation $R$ on $\Omega$ by
  $$R:=\{(\cisom{L,x},\cisom{L,x'})\in\Omega\times \Omega\mid x,x'\in \Laff.\}.$$
\end{defn}
\begin{lem}\label{l:XifulltransversalonOmega}
  The discrete hull $\Xi$ is a full transversal on the continuous hull $\Omega$ relative to $R$, and $R_{\Xi}=R'$.
\end{lem}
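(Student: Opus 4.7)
The proof will mostly be an unpacking of the two definitions in play: the relation $R$ on $\Omega$ and the relation (call it $R'$) on $\Xi$ inherited from \cite{MRSdiscretehull}, under which two pointed combinatorial tilings $(L,v),(L',v')$ are equivalent precisely when $L\cong L'$ as (unpointed) combinatorial tilings. My plan is to verify three things in order: fullness, countability of each fiber, and the identification $R_\Xi = R'$.

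First I would prove fullness. Given any class $\cisom{L,x}\in\Omega$, the underlying space $\Laff$ is a $2$-dimensional CW-complex, so in particular its $0$-skeleton $\Laff^{(0)}$ is nonempty. Pick any vertex $v\in\Laff^{(0)}$; then $\cisom{L,v}\in\Xi$ by definition of the discrete hull, and the pair $(\cisom{L,x},\cisom{L,v})$ belongs to $R$ since both classes share the same underlying tiling $L$. Therefore $\cisom{L,x}_R\cap\Xi\neq\emptyset$.

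Next I would verify countability. Given $\cisom{L,x}\in\Omega$, an arbitrary element of $\cisom{L,x}_R\cap\Xi$ has the form $\cisom{L',v'}$ for some combinatorial tiling $L'$ isomorphic to $L$ and some vertex $v'\in L'$. Transporting via the isomorphism, every such class is represented by some $\cisom{L,v}$ with $v\in\Laff^{(0)}$, so the intersection is the image of the map $\Laff^{(0)}\to\Omega$, $v\mapsto\cisom{L,v}$. Because $L$ is a combinatorial tiling of the plane (homeomorphic to $\R^2$) built out of pentagonal cells, the vertex set $\Laff^{(0)}$ is countable; hence so is its image $\cisom{L,x}_R\cap\Xi$.

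Finally I would identify $R_\Xi$ with $R'$. By definition, $(\cisom{L,v},\cisom{L',v'})\in R_\Xi$ means that both points lie in $\Xi$ and are $R$-equivalent in $\Omega$, i.e.\ there is a cell-preserving, decoration-preserving, per-cell isometric isomorphism $\Laff\to \Laff'$ (with no constraint on where $v$ is sent). But this is exactly the same data as an isomorphism of the underlying combinatorial tilings, which is the defining condition for $R'$. Thus $R_\Xi = R'$.

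The argument is essentially formal; the only step where one must be slightly attentive is the countability claim, where one needs to note that the parametrization by $\Laff^{(0)}$ may identify different vertices (two vertices in the same orbit of $\mathrm{Aut}(L)$ give the same class in $\Omega$), but this only makes the intersection smaller, not larger, so countability is preserved. No deeper tool from the rest of the paper is required.
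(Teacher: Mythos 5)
Your proposal is correct and follows essentially the same route as the paper: unwind the definition of $R$ to identify $[\cisom{L,x}]_R\cap\Xi$ with the set of classes $\cisom{L,v}$ for $v$ a vertex of $L$, observe that this set is nonempty and countable because a combinatorial tiling of the plane has countably many vertices, and note that the restricted relation is exactly $R'$. Your added remark that the parametrization by vertices need not be injective (automorphisms may identify vertices) is a small but worthwhile refinement of the paper's phrase ``in bijective correspondence with the vertices of $L$.''
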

\begin{proof}
  By definition $[\cisom{L,x}]_R\cap \Xi=\{[\cisom{L,x}]_R\mid x\in \Laff \text{ and } x\in L^{(0)}\}= \{\cisom{L,v}\mid v\in L^{(0)}\}=[\cisom{L,v}]_{R'}$, which is a countable set (as a geometric realization of $L$ is a tiling of the plane and so it has countably many tiles and each tile has 5 vertices).  The intersection is nonempty as it is in bijective correspondence with the vertices of $L$. So we have a full transversal.
\end{proof}

\begin{pro}
The discrete hull $\Xi$ is a closed compact subset of $\Omega$.
\end{pro}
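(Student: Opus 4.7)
The plan is to reduce the claim to showing that $\Xi$ is closed in $\Omega$, since compactness will then follow immediately from the compactness of $\Omega$ established in Theorem \ref{t:ContinuousHullCompact}.

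To prove closedness I would take an arbitrary sequence $\cisom{L_n,v_n}\in\Xi$ converging in $\Omega$ to some $\cisom{L,x}$, and show that $x$ must itself be a vertex of $\Laff$. Unwinding the definition of $d$ on $\Omega$, there is a sequence $\varepsilon_n\to 0$ together with cell-preserving, isometric-on-each-cell maps $\phi_n:B(v_n,1/\varepsilon_n,L_n)\to L$ satisfying $d_L(\phi_n(v_n),x)\le\varepsilon_n$. Because $\phi_n$ preserves cells and is an isometry on cells, it sends $0$-cells to $0$-cells, so $u_n:=\phi_n(v_n)$ is a vertex of $\Laff$ for every $n$, and by construction $u_n\to x$ in $\Laff$.

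The key point is that the set of vertices of $\Laff$ is uniformly discrete in the metric $d$. Indeed, any two distinct vertices are at edge-distance $d'\ge 1$ (each edge has unit length), so by Theorem \ref{t:dledple3donLaff} they are at distance $d\ge 1/3$ in $\Laff$. Hence the vertex set is $(1/3)$-separated and in particular closed in $\Laff$. Since $u_n$ is a sequence of vertices converging to $x$, eventually $u_n=x$, and $x$ is itself a vertex. Therefore $\cisom{L,x}\in\Xi$, proving $\Xi$ is closed.

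I do not expect any real obstacle here; the only thing that needs a moment of care is confirming that a cell-preserving map which is an isometry on each cell necessarily carries the distinguished vertex $v_n$ to a vertex of $L$, and that the vertex set is uniformly discrete in the $d$-metric, which is where Theorem \ref{t:dledple3donLaff} gets used. Once those are in place, the chain ``closed subset of a compact space is compact'' finishes the proof.
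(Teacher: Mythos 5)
Your proof is correct, but it runs in the opposite direction from the paper's. The paper first establishes compactness of $\Xi$ as a subspace of $\Omega$ — by citing the compactness of $(\Xi,d')$ from the companion paper and invoking Theorem \ref{t:dequivalenttodponOmega} to transfer it to the metric $d$ — and then deduces closedness from the fact that a compact subset of the Hausdorff space $\Omega$ is closed. You instead prove closedness directly (cell-preserving maps send the distinguished vertex to a vertex, and the vertex set of $\Laff$ is $\tfrac13$-separated by Theorem \ref{t:dledple3donLaff}, so a limit of vertices is a vertex) and then get compactness for free from Theorem \ref{t:ContinuousHullCompact} via ``closed subset of a compact space is compact.'' Both arguments are sound. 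Your route has the merit of not needing the quantitative equivalence $d'\le d\le 18 d'$ on $\Xi$ nor the external compactness lemma for $(\Xi,d')$, and it makes visible the geometric reason $\Xi$ sits as a closed transversal, namely uniform discreteness of vertices; the paper's route is shorter given the machinery already in place and sidesteps any analysis of what a convergent sequence in $\Omega$ looks like. The only points in your argument worth making explicit are the ones you already flag: that a cell-preserving, isometric-on-cells map carries $0$-cells to $0$-cells, and that membership of $\cisom{L,x}$ in $\Xi$ is a well-defined property of the isomorphism class (which holds because the identifications are cell-preserving).
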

\begin{proof}
Since the metric $d$ on $\Omega$ restricted to $\Xi$ is equivalent to the metric $d'$ on $\Xi$ (Theorem \ref{t:dequivalenttodponOmega}) and $\Xi$ is compact (Lemma 2.11 in \cite{MRSdiscretehull}),  the space $\Xi$ is a compact subset of $\Omega$. Since $\Omega$ is Hausdorff, $\Xi$ is closed.
\end{proof}

\begin{description}
  \item[\emph{\textbf{Question}.}] It is an open question whether the $C^*$-algebra $C^*(R)$ exists. For that it needs a topology and a Haar system.
\end{description}


\subsection*{Acknowledgments.}
The results of this paper were obtained during my Ph.D. studies at University of Copenhagen. I would like to express deep gratitude to my supervisor Erik Christensen and Ian F. Putnam whose guidance and support were crucial for the successful completion of this project.


\end{document}